\documentclass[10pt,a4paper,reqno]{amsart}
\usepackage[latin1]{inputenc}
\usepackage[english]{babel}
\usepackage{amsmath}
\usepackage{amsfonts}
\usepackage{amssymb}
\usepackage{mathrsfs}
\usepackage{latexsym}
\usepackage{yfonts}
\usepackage{natbib}

\usepackage[margin=3cm]{geometry}

\usepackage{color}

\usepackage{graphicx,color}

\usepackage[plainpages=false,colorlinks,hyperindex,bookmarksopen,linkcolor=red,citecolor=blue,urlcolor=blue]{hyperref}

\bibpunct{[}{]}{;}{n}{,}{,}

\newtheorem{tm}{Theorem}[section]

\newtheorem{defin}{Definition}

\newtheorem{prop}{Proposition}[section]
\newtheorem{remark}{Remark}[section]

\newtheorem{example}{Example}[section]

\numberwithin{equation}{section}

\allowdisplaybreaks

\title{Delayed and rushed motions through time change}
	
	\author{Raffaela Capitanelli}
	\address{Department of Basic and Applied Sciences for Engineering\newline Sapienza University of Rome\newline via A. Scarpa 10, Rome, Italy}

	\author{Mirko D'Ovidio}
	\email[Corresponding author]{mirko.dovidio@uniroma1.it}

\begin{document}

\begin{abstract}
 We introduce a definition of delayed and rushed processes in terms of lifetimes of base processes and time-changed base processes. Then, we consider time changes given by subordinators and their inverse processes. Our analysis shows that, quite surprisingly, time-changing with inverse subordinators does not necessarily imply delay of the base process. Moreover, time-changing with subordinators does not necessarily imply rushed base process.
\end{abstract}

\date{\today}
\maketitle

\bigskip
Keywords: Time changes, non-local operators, time-fractional equations, space-fractional equations, fractional diffusions, anomalous diffusions.

\bigskip AMS-MSC : 26A33, 35R11, 60G22.

\section{Introduction}

Fractional and anomalous diffusions have a long history. The terms fractional and anomalous have been considered with different meaning and in different contexts. By fractional diffusion we mean a diffusion in a medium with fractional dimension (fractals, for instance) whereas, by anomalous diffusions, according to the most significant literature, we refer  to a motion whose mean squared displacement is proportional to a power of time. The anomalous dynamics is considered in many fields of research and many practical applications, for example in finance, physics, ecology, biology, hydrology:  the literature is huge, we mention here only few works, for example, \cite{Clark, FGIS, GGPS, GoldCox}.  Our aim is to pay exclusive attention to the probabilistic models for anomalous dynamics.

The well-known theory  of time-changed processes considers Markov and non-Markov random times. As usual we refer to subordination if the random time is Markovian. Subordinated processes are associated to subordinated semigroups (in the sense of Bochner for instance) and the generators can be represented by considering the Phillips formula (not necessarily for the generator of a Markov process). Processes obtained through non-Markovian time changes can be considered in order to solve fractional Cauchy problems. In this case, in fact, time-changed processes turn out to be non-Markovian and  the fractional operators in time are convolution operators with kernels associated with inverses of subordinators. A mathematical approach has been introduced by \cite{Baz} and further investigations have been considered by many researchers  (see, for example, \cite{Koc,Kolok,Meer, ORs}). Given a Markov process $X$ with generator $(A,D(A))$ and the inverse $L$ to a stable subordinator $H$ of order $\alpha \in (0,1)$, then $u(t,x) = \mathbf{E}_x[f(X_{L_t})]$ solves 
\begin{align*}
\partial^\alpha_t u= A u, \quad f \in D(A)
\end{align*}
where $\partial^\alpha_t$ is the Caputo fractional derivative. Since the mean squared displacement of the process $X_L$ is non linear in time (proportional to a power of time, $t^\alpha$, with $\alpha<1$) we say that $X_L$ exhibits a subdiffusive behaviour. In the literature, such a process has been also associated to a delayed process in the sense that the trajectories of $L$ may have plateaus. Since $L$ is the new clock for $X$, then the process $X_L$ is usually termed \lq\lq delayed\rq\rq .

On the other hand, if $H$ is an $\alpha$-stable subordinator, due to the fact that the trajectories of $H$ exhibit jumps, the subordinated process $X_H$ may have jumps. This should suggest that the process $X_H$ runs faster than $X$. The generator of $X_H$ is the fractional power of $-A$ given by $-(-A)^\alpha$. Here we assume that $-A$ is a non-negative definite operator. We point out that non-local operators in space may introduce non-local boundary conditions and non-local operators in time may introduce fractional initial conditions.

Recently, new fractional operators in time have been introduced in \cite{chen, toaldo}. The probabilistic representation of the solutions to the associated fractional Cauchy problems is  still obtained through time changes. This class of new fractional equations brings our attention to a new characterization of the corresponding motions. Beside the anomalous behaviour of the time-changed processes, interesting aspects are given by the comparison between the lifetimes of the base processes and those of the time-changed processes. Surprisingly, our analysis reveals that inverse processes are not necessarily related to delayed processes.

The aim of the present paper is therefore to investigate the underlying dynamics for the time changes. We introduce a precise definition of delayed and rushed processes and provide some examples which are, in some cases, counterintuitive. 

%
%

\section{Time changes and operators}
\label{secTF}

In this section, we  introduce the operators and the equations governing $X_L$ and $X_H$ for general time changes $H$ and $L$ characterized by the symbol $\Phi$.

Let $E$ be a locally compact, separable metric space   and $E_\partial= E \cup \{\partial\}$ be the one-point compactification of $E$. Denote by $\mathcal{B}(E)$ the $\sigma$-field of the Borel sets in $E$ ($\mathcal{B}_\partial$ is the $\sigma$-field in $E_\partial$).  Let $X=\{(X_t)_{t\geq 0}, (\mathbf{P}_x)_{x \in E}\}$ with infinitesimal generator $(A, D(A))$ be the continuous and symmetric Markov process on $(E, \mathcal{B}(E))$ with transition function $p(t, x, B)$ on $[0, \infty) \times E \times \mathcal{B}(E)$. The point $\partial$ is the cemetery point for $X$ and a function $f$ on $E$ can be extended to $E_\partial$ by setting $f(\partial)=0$. The associated semigroup is uniquely defined by
\begin{align*}
P_t f(x):= \int_E p(t,x,dy) f(y) = \mathbf{E}_x[f(X_t)], \quad f \in C_\infty(E)
\end{align*}
where $\mathbf{E}_x$ denote the mean value with respect to $\mathbf{P}_x$ with $X_0=x \in E$ and $C_\infty$ is the set of continuous function $C(E)$ on $E$ such that $f(x)\to 0$ as $x\to \partial$. Let $\mathcal{E}(u,v)= (\sqrt{-A}u, \sqrt{-A}v)$ with domain $D(\mathcal{E})= D(\sqrt{-A})$ be the Dirichlet form associated with (the non-positive definite, self-adjoint operator) $A$. Then $X$ is equivalent to an $m$-symmetric Hunt process whose Dirichlet form $(\mathcal{E}, D(\mathcal{E}))$ on $L^2(E,m)$ is local (see  the books \cite{Chen-book, Fuk-book}). Without restrictions we assume that the form is regular. Then, $X$ is a continuous strong Markov process (we say Feller diffusion).\\

We now introduce the Bernstein function 
\begin{equation}
\Phi(\lambda) = \int_0^\infty \left( 1 - e^{ - \lambda z} \right) \Pi(dz), \quad \lambda \geq 0 
\label{LevKinFormula}
\end{equation} 
where $\Pi$ on $(0, \infty)$ with $\int_0^\infty (1 \wedge z) \Pi(dz) < \infty$ is the associated L\'evy measure.  We also recall that
\begin{align}
\label{tailSymb}
\frac{\Phi(\lambda)}{\lambda} = \int_0^\infty e^{-\lambda z} \overline{\Pi}(z)dz, \qquad \overline{\Pi}(z) = \Pi((z, \infty))
\end{align}
and $\overline{\Pi}$ is the so called \emph{tail of the L\'evy measure}. For details, see the book  \cite{BerBook}. The symbol $\Phi$ can be associated with the Laplace exponent of a subordinator $H$, that is 
$$\mathbf{E}_0[\exp( - \lambda H_t)] = \exp(- t \Phi(\lambda)).$$ 
We introduce the inverse process 
$$L_t = \inf \{s \geq 0\,:\, H_s >t \}$$ 
and define the time-changed process 
$$X^{L}_t :=X_{L_t}, \quad t \geq 0$$  
as the composition $X \circ L$ and the time-changed process
$$X^H_t := X_{H_t}, \quad t>0$$ 
as the composition $X \circ H$. We do not consider step-processes with $\Pi((0, \infty)) < \infty$ and therefore we focus only on strictly increasing subordinators with infinite measures. Thus, the inverse process $L$ turns out to be a continuous process with non-decreasing paths. By definition, we also can write
\begin{align}
\label{relationHL}
\mathbf{P}_0(H_t < s) = \mathbf{P}_0(L_s>t), \quad s,t>0.
\end{align}

Let $M>0$ and $w\geq 0$. Let $\mathcal{M}_w$ be the set of (piecewise) continuous function on $[0, \infty)$ of exponential order $w$ such that $|u(t)| \leq M e^{wt}$. Denote by $\widetilde{u}$ the Laplace transform of $u$. Then, we define the operator $\mathfrak{D}^\Phi_t : \mathcal{M}_w \mapsto \mathcal{M}_w$ such that
\begin{align*}
\int_0^\infty e^{-\lambda t} \mathfrak{D}^\Phi_t u(t)\, dt = \Phi(\lambda) \widetilde{u}(\lambda) - \frac{\Phi(\lambda)}{\lambda} u(0), \quad \lambda > w
\end{align*}
where $\Phi$ is given in \eqref{LevKinFormula}. Since $u$ is exponentially bounded, the integral $\widetilde{u}$ is absolutely convergent for $\lambda>w$.  By Lerch's theorem the inverse Laplace transforms $u$ and $\mathfrak{D}^\Phi_tu$ are uniquely defined. We note that
\begin{align}
\label{PhiConv}
\Phi(\lambda) \widetilde{u}(\lambda) - \frac{\Phi(\lambda)}{\lambda} u(0) = & \left( \lambda \widetilde{u}(\lambda) - u(0) \right) \frac{\Phi(\lambda)}{\lambda}
\end{align}
and thus, $\mathfrak{D}^\Phi_t$ can be written as a convolution involving the ordinary derivative and the inverse transform of \eqref{tailSymb} iff $u \in \mathcal{M}_w \cap C([0, \infty), \mathbb{R}_+)$ and $u^\prime \in \mathcal{M}_w$. That is,
\begin{align*}
\mathfrak{D}^\Phi_t u(t) = \int_0^t u^\prime(s) \overline{\Pi}(t-s)ds.
\end{align*}
By Young's inequality we also observe that
\begin{align}
\label{YoungSymb}
\int_0^\infty |\mathfrak{D}^\Phi_t u |^p dt \leq \left( \int_0^\infty |u^\prime |^p dt \right) \left( \lim_{\lambda \downarrow 0} \frac{\Phi(\lambda)}{\lambda} \right)^p, \qquad p \in [1, \infty)
\end{align}
where $\lim_{\lambda \downarrow 0} \Phi(\lambda) /\lambda$ is finite only in some cases (\cite{CapDovVIA}) .
We notice that when $\Phi(\lambda)=\lambda$ (that is we deal with the ordinary derivative) we have that $H_t = t$ and $L_t=t$ a.s. and in \eqref{YoungSymb} the equality holds. For explicit representation of the operator $\mathfrak{D}^\Phi_t$ see also the recent works \cite{chen, toaldo}. 

\begin{remark}
\label{remark:telegr}
We notice that for $\Phi(\lambda)=\lambda^\beta$, the symbol of a stable subordinator, the operator $\mathfrak{D}^\Phi_t$ becomes the Caputo fractional derivative
\begin{align*}
\mathfrak{D}^\Phi_t u(t) = \frac{1}{\Gamma(1-\beta)} \int_0^t \frac{u^\prime(s)}{(t-s)^\beta}ds
\end{align*}
with $u^\prime(s)=du/ds$. 
A further example is given by the symbol $\Phi(\lambda)=\lambda^{2\beta} + \lambda^\beta$ for $\beta \in (0, 1/2)$, that is, $\mathfrak{D}^\Phi_t$ becomes the telegraph fractional operator (\cite{DOTtelegraph}).

The case $\beta=1$ is an interesting case of exponential velocity correlation. If $\mathcal{T}_t$ is the telegraph process, then $
\mathcal{T}_t = \int_0^t V_s\, ds \quad \textrm{with} \quad V_s = V_0  (-1)^{N_s}$ where $N_s$, $s\geq 0$ is an homogeneous Poisson process of rate $\lambda$ independent from $V_0$ with $\mathbf{P}(V_0=+c)=\mathbf{P}(V_0=-c)$, $c>0$. The process on the real line is performed by a random motion with finite speed $c$. The second moment is proportional to $t\,c^2/\lambda$ for large $t$ (\cite{orsT}). For the first passage time $\tau_a$ through the level $a$ we have that $\mathbf{E}_x[\tau_a] = \infty$ (see \cite{Foong, orsT}). This well accords with the fact that $\mathcal{T}_t$ is a process with finite velocity, the process spends an infinite (mean) amount of time in a set.
\end{remark}

We now introduce the Phillips representation (\cite{Phillips})
\begin{align*}
-\Phi(-A) f(x) = \int_0^\infty \big( P_z f(x) -  f(x) \big) \Pi(dz) 
\end{align*}
where $\Phi$ has been given in \eqref{LevKinFormula}. Let $\Psi$ be the Fourier multiplier of $A$. Then the Fourier symbol of the semigroup $P_t = e^{tA}$ is written as $\widehat{P_t} = e^{-t \Psi}$. For a well-defined function $f$, from \eqref{LevKinFormula} we have that
\begin{align*}
\int_{\mathbb{R}} e^{i\xi x} \left( -\Phi(-A) f(x) \right) dx = \left( \int_0^\infty \left( e^{-z \Psi(\xi)} -1 \right) \Pi(dz) \right) \widehat{f}(\xi)= -\Phi \circ \Psi(\xi)\, \widehat{f}(\xi).
\end{align*}
We also recall that composition of Bernstein functions is a Bernstein function.

\begin{remark}
We note that if $H$ is the stable subordinator with symbol  $\Phi(\xi)=\xi^\alpha$ and  $X$ is the Brownian motion with $\Psi(\xi)= \xi^2$,  the process $X^H$ is the symmetric stable process with symbol $|\xi |^{2\alpha}$ driven by the fractional Laplacian
\begin{align}
\label{fracLaplacianS}
-\Phi(- A) = -(-\Delta)^{\alpha}, \quad \alpha\in (0,1).
\end{align}
\end{remark}

%

\section{Time-changed process $X^L_t$}
\label{secSF}

In this section we consider the time fractional equation
\begin{align}
\label{time-frac-problem}
\mathfrak{D}^\Phi_t u = A u, \qquad u_0=f \in D(A).
\end{align}

The probabilistic representation of the solution to \eqref{time-frac-problem} is written in terms of the time-changed process $X^{L}$, that is 
\begin{align}
\label{sol-time-frac-problem}
u(t,x) = \mathbf{E}_x[f(X^{L}_t)]= \mathbf{E}_x[f({^*X}^{L}_t), t < \zeta^{L}]
\end{align}
where $\zeta^{L}$ is the lifetime of $X^{L}$, the part process of ${^*X}^{L}$ on $E$. The fact that $L$ is continuous implies that 
\begin{align*}
\mathbf{E}_x[f({^*X}^{L}_t), t < \zeta^{L}] = \mathbf{E}_x[f({^*X}_{L_t}), \, L_t < \zeta]
\end{align*}
(see for example \cite[Corollary 3.2]{Meer} or \cite[Section 2]{Svond}). Moreover, we have that
\begin{align}
\label{semigPhi-L-continuity}
\mathbf{E}_x[f({^*X}^{L}_t), t < \zeta^{L}] =  \int_0^\infty P_s f(x) \mathbf{P}_0(L_t \in ds) :=P^{L}_t f(x) 
\end{align}
where $P_tf(x) = \mathbf{E}_x [f(X_t)] = \mathbf{E}_x [f({^*X}_t), t < \zeta]$ and $\zeta$ is the lifetime of $X$, the part process of ${^*X}$ on $E$. Then, for the  time-changed processes we have that
\begin{align}
\label{semigPhi-1}
P^{L}_t \mathbf{1}_E (x) = \mathbf{P}_x(t < \zeta^{L}) = \mathbf{P}_x(L_t < \zeta) = \int_0^\infty \mathbf{P}_x(s < \zeta) \mathbf{P}_0(L_t \in ds).
\end{align}
We notice that $P_t^{L}$ is not a semigroup (indeed the random time is not Markovian). We also introduce the following $\lambda$-potentials ($\lambda>0$)
\begin{align*}
R_\lambda f(x) = \mathbf{E}_x \left[ \int_0^\infty e^{-\lambda t} f(X_t)dt \right], \qquad R^{L}_\lambda f(x) = \mathbf{E}_x \left[ \int_0^\infty e^{-\lambda t} f(X^{L}_t)dt \right].
\end{align*}

The following result concerns the relation between the lifetime of the time-changed process  $X^L$ with the lifetime of the random time $H$.
\begin{tm}
\label{meanLzeta}
Let $\Phi$ be the Bernstein function in \eqref{LevKinFormula}. Let $H$ be the subordinator with symbol $\Phi$. We have
\begin{align}
\mathbf{E}_x[\zeta^L] = \mathbf{E}_x[H_\zeta], \quad x \in E.
\end{align}
\end{tm}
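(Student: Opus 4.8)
The plan is to reduce the statement to a one-dimensional occupation identity for the inverse subordinator and then integrate it against the law of the lifetime $\zeta$. Since $\zeta^L \ge 0$, I would start from the tail (layer-cake) formula $\mathbf{E}_x[\zeta^L] = \int_0^\infty \mathbf{P}_x(\zeta^L > t)\,dt$, which is legitimate by Tonelli and holds with both sides possibly equal to $+\infty$. The crucial structural input is then \eqref{semigPhi-1}, which identifies the survival probability of the time-changed process, $\mathbf{P}_x(t < \zeta^L) = \mathbf{P}_x(L_t < \zeta)$. Substituting gives $\mathbf{E}_x[\zeta^L] = \int_0^\infty \mathbf{P}_x(L_t < \zeta)\,dt$, so the whole matter is reduced to understanding how long the new clock $L$ stays below the (random) level $\zeta$.

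Next I would exploit the standing independence of the base process $X$, hence of its lifetime $\zeta$, from the time change $H, L$, and condition on $\zeta$. For a fixed level $s>0$, Tonelli again yields $\int_0^\infty \mathbf{P}_0(L_t < s)\,dt = \mathbf{E}_0\big[\int_0^\infty \mathbf{1}(L_t < s)\,dt\big]$. Here I would invoke the defining inverse relation \eqref{relationHL}, which pathwise reads $\{L_t < s\} = \{t < H_{s^-}\}$: indeed $L_t = \inf\{u : H_u > t\}$ is continuous and nondecreasing with $H$ as its right-continuous inverse, so $L_t < s$ if and only if $\sup_{u<s} H_u = H_{s^-} > t$. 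Consequently $\int_0^\infty \mathbf{1}(L_t < s)\,dt = H_{s^-}$, and since a subordinator has no fixed time of discontinuity one has $H_{s^-} = H_s$ almost surely, whence $\int_0^\infty \mathbf{P}_0(L_t < s)\,dt = \mathbf{E}_0[H_s]$. The tower property together with independence then closes the argument: $\mathbf{E}_x[\zeta^L] = \mathbf{E}_x\big[\mathbf{E}_0[H_s]\big|_{s=\zeta}\big] = \mathbf{E}_x[H_\zeta]$.

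The main obstacle will be careful measure-theoretic bookkeeping at two boundary subtleties rather than any deep estimate. First, I must justify the pathwise occupation identity $\int_0^\infty \mathbf{1}(L_t<s)\,dt = H_{s^-}$ cleanly, controlling the endpoints where $L$ has a plateau at level $s$ during a jump of $H$: the set $\{t : L_t = s\}$ is an interval of length $H_s - H_{s^-}$, so replacing $L_t < s$ by $L_t \le s$ changes the integrand only on a Lebesgue-null effect after integration, but this must be read off from the continuity of $L$. Second, I must justify $\mathbf{E}_0[H_{s^-}] = \mathbf{E}_0[H_s]$ and apply Tonelli uniformly in the regime where the means are infinite, as for the stable subordinator $\Phi(\lambda)=\lambda^\beta$ where $\mathbf{E}_0[H_s] = \infty$, so that the asserted equality persists as an identity in $[0,\infty]$. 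Once these points are settled, the three displayed steps conclude the proof.
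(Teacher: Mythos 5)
Your proof is correct, and it shares the paper's skeleton: both arguments reduce $\mathbf{E}_x[\zeta^L]$ to $\int_0^\infty \mathbf{P}_x(L_t<\zeta)\,dt$ via \eqref{semigPhi-1} and then convert the event $\{L_t<\zeta\}$ into a statement about $H$ evaluated at $\zeta$. The difference is the mechanism of that conversion. The paper works distributionally: for fixed $t$ it conditions on $\zeta$, integrates by parts in $s$ to pass from $\int_0^\infty \mathbf{P}_x(\zeta>s)\,\tfrac{d}{ds}\mathbf{P}_0(L_t<s)\,ds$ to $\int_0^\infty \mathbf{P}_0(L_t<s)\,\mathbf{P}_x(\zeta\in ds)$, invokes the duality \eqref{relationHL} to identify the integrand with $\mathbf{P}_0(H_s>t)$, and only then integrates the tail $\mathbf{P}_x(H_\zeta>t)$ in $t$. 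You work pathwise: for fixed $s$ the occupation identity $\int_0^\infty \mathbf{1}(L_t<s)\,dt = H_{s^-}$, together with the absence of fixed discontinuities of a subordinator ($H_{s^-}=H_s$ a.s.\ for each deterministic $s$), gives $\int_0^\infty \mathbf{P}_0(L_t<s)\,dt = \mathbf{E}_0[H_s]$, and Tonelli plus independence of the time change from $X$ finishes. Your route buys two genuine technical advantages: every step is an identity in $[0,\infty]$, so the infinite-mean regime (e.g.\ stable $H$, where $\mathbf{E}_0[H_s]=\infty$) requires no separate treatment; and you never discard a boundary term, whereas the paper's integration by parts silently assumes the product $\mathbf{P}_x(\zeta>s)\mathbf{P}_0(L_t<s)$ vanishes as $s\to\infty$, which is delicate if $\mathbf{P}_x(\zeta=\infty)>0$ (your argument returns $+\infty=+\infty$ automatically in that case). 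What the paper's version buys is uniformity with the rest of its machinery: the same resolvent computation $\lim_{\lambda\downarrow 0}R^L_\lambda\mathbf{1}_E$ and Laplace-transform manipulations reappear in the proofs of Theorems \ref{B} and \ref{A}, so its proof is the template for those results, while yours is a self-contained probabilistic argument.
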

\begin{proof}
From \eqref{semigPhi-1} 
\begin{align}
\label{NoSemig}
\mathbf{E}_x[\zeta^L] 
= & \lim_{\lambda \to 0} R^L_\lambda \mathbf{1}_E(x) = \int_0^\infty  P^L_t \mathbf{1}_E(x)\, dt  = \int_0^\infty \mathbf{E}_x[L_t < \zeta] \, dt.
\end{align}
Since
\begin{align*}
 \mathbf{E}_x[L_t < \zeta]  
 = & \int_0^\infty \mathbf{P}_x(\zeta >s) \frac{d}{d s} \mathbf{P}_0(L_t < s) ds \\
 = & \mathbf{P}_x(\zeta>s) \mathbf{P}_0(L_t < s) \bigg|_{s=0}^{s=\infty} + \int_0^\infty \mathbf{P}_x(\zeta \in ds) \mathbf{P}_0(L_t < s) \\
 = & \int_0^\infty \mathbf{P}_x(\zeta \in ds) \mathbf{P}_0(L_t < s),
\end{align*}
from the relation \eqref{relationHL}, 
 \begin{align*}
 \mathbf{E}_x[L_t < \zeta] =  \int_0^\infty \mathbf{P}_x(\zeta \in ds) \mathbf{P}_0(H_s >t)= \mathbf{E}_x[ t < H_\zeta].
 \end{align*}
Then,
\begin{align*}
\int_0^\infty  \mathbf{E}_x[L_t < \zeta] \, dt = \int_0^\infty \mathbf{P}_x(H_\zeta >t)dt = \mathbf{E}_{x}[H_\zeta].
\end{align*}
\end{proof}

Now we recall the following result given in \cite{CapDovVIA} (see also \cite{chen}) that relates the lifetime of the time changed process $X^L$ with the lifetime of the base process $X$.
\begin{tm}
\label{B}
Let $\Phi$ be the Bernstein function in \eqref{LevKinFormula}. Let $L$ be the inverse of $H$ with symbol $\Phi$. We have 
  \begin{equation}
\mathbf{E}_x[\zeta^{L}]=
\lim_{\lambda \downarrow 0} \frac{\Phi(\lambda)}{\lambda} \mathbf{E}_x[\zeta].\end{equation}
\end{tm}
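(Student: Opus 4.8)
The plan is to build directly on Theorem~\ref{meanLzeta}, which already supplies the identity $\mathbf{E}_x[\zeta^L]=\mathbf{E}_x[H_\zeta]$, so that the claim reduces to evaluating the right-hand side. (Equivalently, one can bypass Theorem~\ref{meanLzeta} and start from \eqref{NoSemig}, since the intermediate computation $\mathbf{E}_x[L_t<\zeta]=\mathbf{E}_x[t<H_\zeta]$ together with \eqref{relationHL} gives $\mathbf{E}_x[\zeta^L]=\int_0^\infty \mathbf{P}_x(H_\zeta>t)\,dt=\mathbf{E}_x[H_\zeta]$ anyway.) Thus what remains is to prove
\begin{align*}
\mathbf{E}_x[H_\zeta] = \left(\lim_{\lambda\downarrow 0}\frac{\Phi(\lambda)}{\lambda}\right)\mathbf{E}_x[\zeta], \qquad x\in E.
\end{align*}
Since the random time is independent of the base process $X$, and $\zeta$ is measurable with respect to $X$, the lifetime $\zeta$ is independent of $H$; I would therefore condition on $\zeta$ and write $\mathbf{E}_x[H_\zeta]=\int_0^\infty \mathbf{E}_0[H_s]\,\mathbf{P}_x(\zeta\in ds)$.

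It then suffices to compute $\mathbf{E}_0[H_s]$. Starting from $\mathbf{E}_0[\exp(-\lambda H_s)]=\exp(-s\Phi(\lambda))$, I would differentiate in $\lambda$ and let $\lambda\downarrow 0$, which yields $\mathbf{E}_0[H_s]=s\,\Phi'(0^+)$ by monotone convergence. Because $\Phi(0)=0$, the right derivative equals the limit of the difference quotient, $\Phi'(0^+)=\lim_{\lambda\downarrow 0}\Phi(\lambda)/\lambda$; the very same constant can also be read off from \eqref{tailSymb}, since letting $\lambda\downarrow 0$ there gives $\lim_{\lambda\downarrow 0}\Phi(\lambda)/\lambda=\int_0^\infty\overline{\Pi}(z)\,dz=\int_0^\infty z\,\Pi(dz)$ after an application of Tonelli, and this last integral is exactly $\mathbf{E}_0[H_1]$. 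Substituting $\mathbf{E}_0[H_s]=s\lim_{\lambda\downarrow 0}\Phi(\lambda)/\lambda$ into the conditioning identity and pulling the (deterministic) constant out of the integral produces $\mathbf{E}_x[H_\zeta]=\lim_{\lambda\downarrow 0}(\Phi(\lambda)/\lambda)\,\mathbf{E}_x[\zeta]$, which is the assertion.

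The delicate point I expect is bookkeeping with possibly infinite quantities: as noted after \eqref{YoungSymb}, the constant $\lim_{\lambda\downarrow 0}\Phi(\lambda)/\lambda$ is finite only in special cases, so the equality must be interpreted in $[0,\infty]$, and every interchange of limit and integration (differentiating under $\mathbf{E}_0$ to obtain $\mathbf{E}_0[H_s]$, and Tonelli for the tail representation) has to be justified by \emph{monotonicity} rather than by a dominating function. A secondary, more routine point is to confirm that the independence invoked when conditioning on $\zeta$ is precisely the one built into the construction $X^L=X\circ L$, so that $\mathbf{E}_x[H_\zeta]$ genuinely factorizes as the conditional integral above.
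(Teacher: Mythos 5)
Your proof is correct, but it follows a genuinely different route from the paper's. The paper does not invoke Theorem \ref{meanLzeta} at all: it computes the $\lambda$-potential in closed form, showing via \eqref{semigPhi-1} and \eqref{relationHL} that $R^{L}_\lambda \mathbf{1}_{E}(x) = \frac{1}{\lambda} \mathbf{E}_x[1 - e^{-\zeta \Phi(\lambda)}]$ (formula \eqref{potential-equality}), and then lets $\lambda \downarrow 0$, where monotone convergence applied to $\frac{1}{\lambda}\bigl(1 - e^{-\zeta \Phi(\lambda)}\bigr) = \frac{\Phi(\lambda)}{\lambda} \cdot \frac{1 - e^{-\zeta \Phi(\lambda)}}{\Phi(\lambda)}$ yields the claim. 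You instead chain the identity $\mathbf{E}_x[\zeta^L] = \mathbf{E}_x[H_\zeta]$ of Theorem \ref{meanLzeta} with the elementary first-moment formula $\mathbf{E}_0[H_s] = s\,\Phi'(0^+) = s \lim_{\lambda \downarrow 0} \Phi(\lambda)/\lambda$ and the independence of $H$ and $\zeta$ (which is indeed built into the construction: it is what underlies \eqref{semigPhi-L-continuity}, so your conditioning step is legitimate). Your bookkeeping is also sound: all limits are monotone, $\Phi(\lambda)/\lambda$ increases as $\lambda \downarrow 0$ to $\int_0^\infty \overline{\Pi}(z)\,dz = \int_0^\infty z\,\Pi(dz) = \mathbf{E}_0[H_1] \in (0,\infty]$, and since $\Pi \neq 0$ and $\zeta > 0$ the product $\Phi'(0^+)\,\mathbf{E}_x[\zeta]$ is unambiguous in $[0,\infty]$. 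What the paper's computation buys is the intermediate formula \eqref{potential-equality}, i.e.\ the full Laplace transform of the survival function of $\zeta^L$, which carries strictly more information than the mean; what your route buys is brevity and probabilistic transparency, since the limiting constant is exposed as $\mathbf{E}_0[H_1]$, the mean speed of the subordinator. In fact, the paper itself performs essentially your computation later, in the remark of the section on delayed and rushed diffusions where $\mathbf{E}_x[\zeta^L - \zeta] = \int_0^\infty (\mathbf{E}_0[H_s] - s)\,\mathbf{P}_x(\zeta \in ds) = (\Phi'(0) - 1)\,\mathbf{E}_x[\zeta]$ is derived by exactly this conditioning, so the two arguments coexist in the paper with different roles.
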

\begin{proof}
From \eqref{semigPhi-1} and \eqref{relationHL}, for the $\lambda$-potential above we obtain
\begin{align}
R^{L}_\lambda \mathbf{1}_{E}(x) = & \int_0^\infty \mathbf{P}_x(s < \zeta) \int_0^\infty e^{-\lambda t} \frac{d}{ds} \mathbf{P}_0(H_s >t) dt\, ds \notag \\
= & \frac{1}{\lambda} \int_0^\infty \mathbf{P}_x(s < \zeta) \frac{d}{ds}\left(1 - \mathbf{E}_0[e^{-\lambda H_s}] \right)ds 
= \frac{1}{\lambda} \mathbf{E}_x[1 - e^{-\zeta \Phi(\lambda)}].\label{potential-equality}
\end{align}
By considering \eqref{potential-equality} we have that, as $\lambda \to 0$,
\begin{align}
R^{L}_\lambda \mathbf{1}_{E}(x) = \frac{1}{\lambda} \mathbf{E}_x[1 - e^{-\zeta \Phi(\lambda)}] \to \left( \lim_{\lambda \downarrow 0} \frac{\Phi(\lambda)}{\lambda} \right) \mathbf{E}_x[\zeta] = \mathbf{E}_x[\zeta^{L}]
\end{align}
which is the mean lifetime of the time-changed process $X^{\Phi}$.
\end{proof}

\section{Time-changed process $X^H_t$}
\label{secTSF}

For the process $X$ with generator $(A, D(A))$ introduced before and the independent subordinator $H$ with symbol \eqref{LevKinFormula}, the process $X^H_t=X_{H_t}$, $t\geq 0$ (the composition $X \circ H$) can be considered in order to solve the equation
\begin{align}
\label{space-frac-problem} 
\frac{\partial u}{\partial t} = - \Phi( - A) u, \quad u_0=f \in D(\Phi( - A)) \subset D(A).
\end{align} 
More precisely, the probabilistic representation of the solutions to \eqref{space-frac-problem} is given by
\begin{align*}
P^{H}_t f(x):=\mathbf{E}_x [f(X^{H}_t)]= \mathbf{E}_x [f(X_{H_t})] 
\end{align*}
and (see \cite[Section 2]{Svond} or \cite[Section 3]{SVrel} for a detailed discussion)
\begin{align*}
\mathbf{E}_x [f(X^{H}_t)] = \mathbf{E}_x [f({^*X}^{H}_t), t < \zeta^{H}]
\end{align*}
where $\zeta^{H}$ is the lifetime of $X^{H}$, that is the part process of ${^*X}^{H}$ on $E$. We recall that the multiplicative functional $M_t = \mathbf{1}_{(t<\zeta^H)}$ characterizes uniquely the semigroup $P^H_t$ (see \cite[pag. 100, Proposition 1.9]{BluGet68}). We also introduce the following $\lambda$-potential ($\lambda>0$)
\begin{align*}
R^{H}_\lambda f(x) = \mathbf{E}_x \left[ \int_0^\infty e^{-\lambda t} f(X^{H}_t)dt \right].
\end{align*}
We notice that if $H$ is a stable subordinator with symbol $\Phi(\lambda)=\lambda^\alpha$ and $X$ is a Brownian motion, then $-\Phi(-A)$ is the fractional Laplacian (see, for example, \cite{DovJMA, MK} for details and connections between fractional Laplacians, subordinators and random walks continuous in time and space).

The next theorem relates the lifetime of the time-changed process $X^H$ with the lifetime of the random time $L$.

\begin{tm}
\label{meanHzeta}
Let $\Phi$ be the Bernstein function \eqref{LevKinFormula}. Let $L$ be the inverse to a subordinator $H$.  We have that 
\begin{align}
\mathbf{E}_x[\zeta^H] = \mathbf{E}_x[L_\zeta], \quad x \in E.
\end{align}
\end{tm}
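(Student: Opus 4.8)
The plan is to mirror the argument used for Theorem~\ref{meanLzeta}, interchanging the roles of $H$ and $L$. First I would rewrite the mean lifetime of $X^H$ as an integral of its survival probability. Taking $f=\mathbf{1}_E$ in the representation of $P^H_t$ and letting $\lambda\downarrow 0$ in the $\lambda$-potential $R^H_\lambda$, monotone convergence gives
\begin{align*}
\mathbf{E}_x[\zeta^H]=\lim_{\lambda\downarrow 0}R^H_\lambda\mathbf{1}_E(x)=\int_0^\infty P^H_t\mathbf{1}_E(x)\,dt=\int_0^\infty \mathbf{P}_x(t<\zeta^H)\,dt,
\end{align*}
which is the analogue of \eqref{NoSemig}.

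The second step is to identify the survival probability of the subordinated process. Since $X^H_t=X_{H_t}$ is the part process killed at the cemetery, the multiplicative functional $M_t=\mathbf{1}_{(t<\zeta^H)}$ satisfies $\{t<\zeta^H\}=\{H_t<\zeta\}$: the time-changed process is alive at time $t$ precisely when the base process has not yet been killed at the internal clock $H_t$. Hence, in analogy with \eqref{semigPhi-1},
\begin{align*}
P^H_t\mathbf{1}_E(x)=\mathbf{P}_x(t<\zeta^H)=\mathbf{P}_x(H_t<\zeta).
\end{align*}

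The third step uses the independence of $\zeta$ (a functional of $X$) from the subordinator $H$, together with the defining relation \eqref{relationHL}. Conditioning on $\zeta$ and applying $\mathbf{P}_0(H_t<s)=\mathbf{P}_0(L_s>t)$ gives
\begin{align*}
\mathbf{P}_x(H_t<\zeta)=\int_0^\infty\mathbf{P}_x(\zeta\in ds)\,\mathbf{P}_0(H_t<s)=\int_0^\infty\mathbf{P}_x(\zeta\in ds)\,\mathbf{P}_0(L_s>t)=\mathbf{P}_x(L_\zeta>t).
\end{align*}
Integrating in $t$ and using the tail formula for the mean, $\mathbf{E}_x[L_\zeta]=\int_0^\infty\mathbf{P}_x(L_\zeta>t)\,dt$, then yields $\mathbf{E}_x[\zeta^H]=\mathbf{E}_x[L_\zeta]$, which is the claim.

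I expect the only delicate point to be the identification $\{t<\zeta^H\}=\{H_t<\zeta\}$ in the second step. This rests on the structure of the part process and on the fact that $H$ has infinite L\'evy measure and strictly increasing paths, so that the boundary cases distinguishing $H_t<\zeta$ from $H_t\le\zeta$ are negligible and the strict/weak inequalities in \eqref{relationHL} may be used interchangeably; the remainder is a Tonelli and monotone-convergence bookkeeping that parallels the proof of Theorem~\ref{meanLzeta} line by line.
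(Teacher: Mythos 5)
Your proposal is correct and follows essentially the same route as the paper's own proof: both pass to the limit $\lambda \downarrow 0$ of $R^H_\lambda \mathbf{1}_E(x)$, identify the survival probability $\mathbf{P}_x(t<\zeta^H)=\mathbf{P}_x(H_t<\zeta)$ through the subordinated semigroup, and convert $\mathbf{P}_x(H_t<\zeta)$ into $\mathbf{P}_x(L_\zeta>t)$ via \eqref{relationHL} before integrating in $t$. The only cosmetic difference is that you condition on $\zeta$ directly where the paper conditions on $H_t$ and then integrates by parts, which is the same Fubini computation.
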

\begin{proof}
We observe that, as $\lambda\to 0$,
\begin{align*}
R^H_\lambda \mathbf{1}_E(x) \to& \int_0^\infty \int_0^\infty \mathbf{P}_x(X_s \in E) \mathbf{P}_0(H_t \in ds)\, dt\\
= & \int_0^\infty \int_0^\infty \mathbf{P}_x(\zeta >s) \mathbf{P}_0(H_t \in ds)\, dt
=  \int_0^\infty \mathbf{P}_x(\zeta >H_t)\, dt
\end{align*}
where
\begin{align*}
\mathbf{P}_x(\zeta >H_t) 
= & \int_0^\infty \mathbf{P}_x(\zeta >s) \frac{d}{d s} \mathbf{P}_0(H_t < s) ds\\ 
= & \int_0^\infty \mathbf{P}_x(\zeta \in ds) \mathbf{P}_0(L_s >t)
=  \mathbf{P}_x(L_\zeta >t)
\end{align*}
and therefore
\begin{align*}
R^H_\lambda \mathbf{1}_E(x) \to &  \int_0^\infty \mathbf{P}_x(L_\zeta >t)dt = \mathbf{E}_x[L_\zeta].
\end{align*}
Since
\begin{align*}
R^H_\lambda \mathbf{1}_E(x) \to \mathbf{E}_x[\zeta^H], \quad \textrm{as} \quad \lambda\to 0
\end{align*}
we obtain the result.
\end{proof}

Let us focus on the case $\Phi(\lambda)=\lambda^\alpha$. We first recall that $L$, the inverse to a subordinator $H$, can be characterized as follows
\begin{align}
\label{Lap-mittag-L-F}
\mathbf{E}_0 [\exp (-\lambda L_t)] = E_\alpha(-\lambda t^\alpha), \quad \lambda > 0
\end{align}
where, for $\alpha, \gamma>0$,
\begin{align*}
E_{\alpha, \gamma}( -z) = \sum_{k \geq 0} \frac{(-z)^k}{\Gamma(\alpha k + \gamma)}, \quad z\geq 0
\end{align*}
is the generalized Mittag-Leffler function. For $\gamma=1$,  $E_\alpha = E_{\alpha, 1}$ which is the so called Mittag-Leffler function. 

The next theorem gives a precise characterization of the lifetime of $X^H$ in terms of the symbol $\Phi$.

\begin{tm}
\label{A}
Let $\Phi(\lambda) = \lambda^\alpha$. Let $H$ be the subordinator with symbol $\Phi$. Then
\begin{align*}
\mathbf{E}_x[\zeta^{H}] = & \frac{1}{\Gamma(\alpha +1)} \mathbf{E}_x \left[ \Phi(\zeta) \right] =  \lim_{\lambda \downarrow 0} \frac{\mathbf{E}_x \left[ \Phi(\zeta) \right]}{\mathfrak{D}^\Phi_\lambda \Phi(\lambda)}
\end{align*}
and 
\begin{align*}
\lim_{s\to \infty} \Phi(s) \mathbf{P}_x(\zeta >s) =0.
\end{align*}
Moreover, if 
$$\mathbf{P}_x(\zeta > s) \leq  e^{-\omega s}, \quad \textrm{for some } \; \omega>0$$
then
\begin{align*}
\mathbf{E}_x[\zeta^{H}] \leq \frac{1}{\gamma^\alpha} \mathbf{E}_x[ e^{\gamma \zeta}] , \quad \gamma < \omega
\end{align*}
\end{tm}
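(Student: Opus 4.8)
The plan is to reduce everything to the law of the inverse stable subordinator via Theorem \ref{meanHzeta}. That theorem gives $\mathbf{E}_x[\zeta^H]=\mathbf{E}_x[L_\zeta]$, and since $L$ is independent of $X$ (hence of $\zeta$), I would condition on $\zeta$ and insert the first moment of $L$. This moment is read off from \eqref{Lap-mittag-L-F}: expanding $E_\alpha(-\lambda s^\alpha)=1-\lambda s^\alpha/\Gamma(\alpha+1)+O(\lambda^2)$ and differentiating at $\lambda=0$ yields $\mathbf{E}_0[L_s]=s^\alpha/\Gamma(\alpha+1)$. Conditioning then gives $\mathbf{E}_x[\zeta^H]=\mathbf{E}_x[\zeta^\alpha]/\Gamma(\alpha+1)=\mathbf{E}_x[\Phi(\zeta)]/\Gamma(\alpha+1)$, which is the first equality.

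For the second equality I would compute $\mathfrak{D}^\Phi_\lambda\Phi(\lambda)$ explicitly. Since $\Phi(\lambda)=\lambda^\alpha$, Remark \ref{remark:telegr} identifies $\mathfrak{D}^\Phi$ with the Caputo derivative of order $\alpha$, and applying it to $g(\lambda)=\lambda^\alpha$ produces, through a Beta integral, $\frac{\alpha}{\Gamma(1-\alpha)}\int_0^\lambda s^{\alpha-1}(\lambda-s)^{-\alpha}\,ds=\frac{\alpha}{\Gamma(1-\alpha)}B(\alpha,1-\alpha)=\alpha\Gamma(\alpha)=\Gamma(\alpha+1)$, a constant in $\lambda$. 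Hence $\lim_{\lambda\downarrow0}\mathfrak{D}^\Phi_\lambda\Phi(\lambda)=\Gamma(\alpha+1)$ and $\lim_{\lambda\downarrow0}\mathbf{E}_x[\Phi(\zeta)]/\mathfrak{D}^\Phi_\lambda\Phi(\lambda)=\mathbf{E}_x[\Phi(\zeta)]/\Gamma(\alpha+1)$, matching the first expression.

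For the tail limit I would work in the meaningful regime $\mathbf{E}_x[\Phi(\zeta)]=\mathbf{E}_x[\zeta^\alpha]<\infty$ (when this fails, both sides of the identity are $+\infty$). Writing $Y=\zeta^\alpha$, the elementary fact that $y\,\mathbf{P}_x(Y>y)\le\mathbf{E}_x[Y\mathbf{1}_{Y>y}]\to0$ for an integrable nonnegative $Y$ gives, with $y=s^\alpha$, that $s^\alpha\mathbf{P}_x(\zeta>s)=s^\alpha\mathbf{P}_x(\zeta^\alpha>s^\alpha)\to0$, that is $\Phi(s)\mathbf{P}_x(\zeta>s)\to0$.

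Finally, for the exponential estimate the key ingredient is the pointwise inequality $s^\alpha\le\Gamma(\alpha+1)\gamma^{-\alpha}e^{\gamma s}$, valid for all $s\ge0$ and $\gamma>0$. I would prove it by the substitution $v=t-\gamma s$ in $\Gamma(\alpha+1)e^{\gamma s}=\int_0^\infty t^\alpha e^{-(t-\gamma s)}\,dt=\int_{-\gamma s}^\infty(v+\gamma s)^\alpha e^{-v}\,dv$, which is at least $\int_0^\infty(\gamma s)^\alpha e^{-v}\,dv=(\gamma s)^\alpha$ since the integrand is nonnegative and $(v+\gamma s)^\alpha\ge(\gamma s)^\alpha$ for $v\ge0$. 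Applying this with $s=\zeta$, taking $\mathbf{E}_x$ and dividing by $\Gamma(\alpha+1)$ turns the first equality into $\mathbf{E}_x[\zeta^H]\le\gamma^{-\alpha}\mathbf{E}_x[e^{\gamma\zeta}]$, and the hypothesis $\mathbf{P}_x(\zeta>s)\le e^{-\omega s}$ with $\gamma<\omega$ is precisely what makes $\mathbf{E}_x[e^{\gamma\zeta}]$ finite. I expect the only genuine subtlety to be the finiteness bookkeeping in the tail-limit step (the limit is only true when $\mathbf{E}_x[\zeta^\alpha]<\infty$), together with keeping the constant in the pointwise inequality sharp enough to avoid a spurious extra factor $\Gamma(\alpha+1)$.
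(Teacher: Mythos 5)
Your proposal is correct, but it takes a genuinely different route from the paper's. The paper works entirely at the level of the resolvent: it computes $R^{H}_\lambda \mathbf{1}_E(x)$ explicitly through the density of $H_t$ and Mittag-Leffler identities, obtaining
\begin{align*}
R^{H}_\lambda \mathbf{1}_E(x) = \int_0^\infty \mathbf{P}_x(\zeta>s)\, s^{\alpha-1} E_{\alpha,\alpha}(-\lambda s^\alpha)\, ds = \frac{1}{\lambda}\mathbf{E}_x\left[1 - E_\alpha(-\lambda \zeta^\alpha)\right],
\end{align*}
and then lets $\lambda \downarrow 0$: the second form gives $\mathbf{E}_x[\zeta^\alpha]/\Gamma(\alpha+1)$, while integrating the first form by parts produces the boundary term $s^\alpha \mathbf{P}_x(\zeta>s)\big|_{s=0}^{s=\infty}$, whose vanishing is exactly the claimed tail limit; the exponential bound then comes from Markov's inequality $\mathbf{P}_x(\zeta>s)\leq \mathbf{E}_x[e^{\gamma\zeta}]e^{-\gamma s}$ inserted under the integral in \eqref{firstRpsi}, together with the transform $\int_0^\infty e^{-\gamma s} s^{\alpha-1}E_{\alpha,\alpha}(-\lambda s^\alpha)\,ds = (\gamma^\alpha+\lambda)^{-1}$. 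You instead reuse Theorem \ref{meanHzeta} plus the first moment $\mathbf{E}_0[L_s]=s^\alpha/\Gamma(\alpha+1)$ (conditioning on $\zeta$ via independence), obtain the tail limit from the elementary fact that $y\,\mathbf{P}(Y>y)\to 0$ for an integrable $Y\geq 0$, and prove the exponential bound from the pointwise inequality $s^\alpha \leq \Gamma(\alpha+1)\gamma^{-\alpha}e^{\gamma s}$; your Beta-integral evaluation of $\mathfrak{D}^\Phi_\lambda \Phi(\lambda)=\Gamma(\alpha+1)$ coincides with the paper's. Your version is more elementary and modular, avoiding Mittag-Leffler machinery beyond the moment expansion of \eqref{Lap-mittag-L-F}; what the paper's computation buys is the explicit resolvent formula \eqref{potentialML}, which it needs anyway for its proof of the exponential estimate. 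One point where your treatment is actually sharper: you make explicit that the tail limit requires $\mathbf{E}_x[\zeta^\alpha]<\infty$ — without it the statement can genuinely fail (take $\mathbf{P}_x(\zeta>s)=s^{-\alpha}$ for $s\geq 1$, so that $\Phi(s)\mathbf{P}_x(\zeta>s)\equiv 1$) — whereas the paper's comparison of the two expressions for $\lim_{\lambda\downarrow 0}R^H_\lambda\mathbf{1}_E(x)$ uses this finiteness implicitly in cancelling $\mathbf{E}_x[\zeta^\alpha]$ from both sides, without stating it.
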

\begin{proof}
We have that
\begin{align}
P^{H}_t \mathbf{1}_{E}(x) = \int_0^\infty \mathbf{P}_x(s < \zeta) \mathbf{P}_0(H_t \in ds)
\label{semigH-1}
\end{align}
is a semigroup (the random time is Markovian) with the associated  resolvent 
\begin{align}
R^{H}_\lambda \mathbf{1}_E(x) = & \mathbf{E}_x \left[\int_0^\infty e^{-\lambda t} \mathbf{1}_E(X^H_t) dt \right], \quad \lambda>0.
\end{align}
Since $\mathbf{P}_0(H_t < x) = \int_0^x h(t, s)ds$ where
\begin{align*}
\int_0^\infty e^{-\lambda t} h(t,s) dt = & \int_0^\infty e^{-\lambda t} \frac{d}{d s} \mathbf{P}_0(H_t < s)dt
=  \frac{d}{d s} \int_0^\infty e^{-\lambda t} \mathbf{P}_0(L_s > t)dt \\
= & \frac{d}{d s} \left( \frac{1}{\lambda} - \frac{1}{\lambda} \int_0^\infty e^{-\lambda t} \mathbf{P}_0(L_s \in dt) \right)
=  \frac{d}{d s} \left( \frac{1}{\lambda} - \frac{1}{\lambda} E_\alpha(- \lambda s^\alpha) \right)
\end{align*}
and 
\begin{align}
\frac{d}{d s} E_\alpha(-\lambda s^\alpha) = - \lambda s^{\alpha -1} E_{\alpha, \alpha}(- \lambda s^\alpha),
\label{derivML}
\end{align}
we obtain 
\begin{align}
R^{H}_\lambda \mathbf{1}_E(x) = & \mathbf{E}_x \left[\int_0^\infty e^{-\lambda t} \mathbf{1}_E(X_{H_t}) dt \right]
= \int_0^\infty e^{-\lambda t} \int_0^\infty \mathbf{P}_x(X_{s} \in E) \mathbf{P}_0 (H_t \in ds)\, dt \notag \\
= &  \int_0^\infty \mathbf{P}_x(s < \zeta) E_{\alpha,\alpha}(- \lambda s^\alpha) \, s^{\alpha-1} ds \label{potentialML}\\
= & \frac{1}{\lambda} \mathbf{E}_x [1 - E_\alpha(-\lambda (\zeta)^\alpha)]. \label{potentialML-2}
\end{align}
Thus, from \eqref{potentialML-2} we have that, as $\lambda\to 0$,
\begin{align}
\label{limLAMBDA1}
R^{H}_\lambda \mathbf{1}_E(x) \to \mathbf{E}_x [\zeta^{H}] 
\end{align}
where
\begin{align*}
\mathbf{E}_x [\zeta^{H}] = \frac{1}{\Gamma(\alpha +1)} \mathbf{E}_x\left[ (\zeta)^\alpha \right]. 
\end{align*}
Moreover, due to the fact that $\Phi(0)=0$, the Riemann-Liouville and the Caputo derivatives of $\Phi$ coincides. In particular, 
$$\mathfrak{D}^\Phi_\lambda \Phi(\lambda) = \frac{\Gamma(1+ \alpha)}{\Gamma(1+\alpha - \alpha)} \lambda^{\alpha - \alpha}={\Gamma(\alpha +1)} .$$ 
Passing to the limit in \eqref{potentialML} we have that
\begin{align*}
\lim_{\lambda \to 0} R^{H}_\lambda \mathbf{1}_E(x) = & \frac{1}{\Gamma(\alpha +1)} \left( s^\alpha \mathbf{P}_x(\zeta >s) \bigg|_{s=0}^{s=\infty} + \mathbf{E}_x[(\zeta)^\alpha] \right)
\end{align*}
and, from \eqref{limLAMBDA1}, it holds that
\begin{align*}
\lim_{s\to \infty} s^{\alpha} \mathbf{P}_x(\zeta >s) =0
\end{align*}
and this concludes the first part of the proof.\\

Since we have that
\begin{align}
R^{H}_\lambda \mathbf{1}_E(x) = & \int_0^\infty \frac{s^\alpha}{s} E_{\alpha, \alpha} (-\lambda s^\alpha) \mathbf{P}_x(\zeta >s) ds \label{firstRpsi}\\ 
\leq & \int_0^\infty \frac{s^\alpha}{s} E_{\alpha, \alpha} ( - \lambda s^\alpha) \frac{\mathbf{E}_x[g(\zeta)]}{g(s)} ds, \qquad \forall\, \lambda \geq 0 \notag
\end{align}
for $g$ increasing and positive, for $g(s)=e^{\gamma s}$, under the assumption that 
\begin{align*}
\mathbf{P}_x(\zeta > s) \leq e^{-\omega s}, \quad \textrm{for some } \; \omega>0,
\end{align*}
we obtain that, as $\lambda \to 0$,
\begin{align*}
R^{H}_\lambda \mathbf{1}_E(x) \leq \frac{1}{\gamma^\alpha + \lambda} \mathbf{E}_x[ e^{\gamma \zeta}] \to \frac{1}{\gamma^\alpha} \mathbf{E}_x[ e^{\gamma \zeta}] , \quad \gamma < \omega
\end{align*}
where we used (\cite{HMS})
\begin{align*}
\int_0^\infty e^{-\gamma s} s^{\alpha-1} E_{\alpha, \alpha}(-\lambda s^\alpha) ds = \frac{1}{\gamma^\alpha + \lambda}.
\end{align*}
\end{proof}

Let us consider now the Dirichlet boundary condition on $\partial D$ for which the Brownian motion $X$ on $D$ has lifetime $\zeta=\tau_D$, the first exit time from $D$. A killed subordinate Brownian motion on the bounded domain $D\subset \mathbb{R}^d$ is associated with the Dirichlet form on $L^2(D, dx)$ given by
\begin{align*}
\int_D \int_D \big( u(x) - u(y) \big)^2 J(x,y)dx\, dy + \int_D |u(x)|^2\, \kappa^*(x)\, dx, \quad u \in H^\alpha_0(D)
\end{align*} 
with the jumping measure $J(x, y)=C(d,-2\alpha)/2 |x-y|^{-(d+2\alpha)}$ and the killing measure 
\begin{align*}
\kappa^*(x) = C(d,-2\alpha) \int_{D^c} \frac{dy}{|x-y|^{d+2\alpha}}
\end{align*}
($C(d,-2\alpha)$ is a constant depending on $d$ and $\alpha \in (0,1)$). In \cite{Svond} the authors provide an interesting discussion comparing the killed subordinate Brownian motion (with killing measure $\kappa^*$) with the subordinate killed Brownian motion (in our notation $X^H$, $H$ is a stable subordinator of order $\alpha \in (0,1)$). In particular, they show that such processes have comparable killing measures. Denote by $\kappa(\cdot)$ the killing measure of $X^H$ where $H$ has symbol $\Phi(\lambda)=\lambda^\alpha$ and $X$ is a killed Brownian motion on $D \subset \mathbb{R}^d$. Than, we have that (\cite[Lemma 3.1]{Svond})
\begin{align*}
\kappa(x) = \frac{1}{\Gamma(-\alpha + 1)} \mathbf{E}_x[(\zeta)^{-\alpha}], \quad x \in \mathbb{R}^d.
\end{align*}  
We notice that for the subordinate killed Brownian motion $X^H$ on $D \subset \mathbb{R}^d$ we have that  (by applying Theorem \ref{A})
\begin{align*}
\mathbf{E}_x[\zeta^H] = \frac{1}{\Gamma(\alpha + 1)} \mathbf{E}_x[(\zeta)^\alpha], \quad x \in \mathbb{R}^d
\end{align*}

We think that the previous theorem can be generalized  for an arbitrary $\Phi$. In the following proposition we generalize the Lemma 3.1 in \cite{Svond}.
\begin{prop}
Let $X$ be the $d-$dimensional killed Brownian motion on $D$. Let $H$ be the subordinator with symbol $\Phi$ defined in \eqref{LevKinFormula}. Let $\Phi$ be regularly varying at $+\infty$. If 
\begin{align}
\label{condZERO}
\lim_{s\to 0} \Phi(1/s) \mathbf{P}_x(\zeta \leq s ) = 0 
\end{align}
then, the killing measure of $X^H$ is written as
\begin{align}
\label{killingMeas}
\kappa(x) = \mathbf{E}_x[\overline{\Pi}(\zeta)], \quad x \in \mathbb{R}^d
\end{align}
where $\overline{\Pi}$ has been defined in \eqref{tailSymb}.
\end{prop}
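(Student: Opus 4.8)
The plan is to identify the killing measure of $X^H$ with the infinitesimal killing rate of the subordinate semigroup, and then to evaluate this rate through the small-time behaviour of the subordinator. Since $X^H$ lives on $D$ and is sent to the cemetery exactly when $H_t \geq \zeta$, we have $\mathbf{P}_x(t < \zeta^H) = \mathbf{P}_x(H_t < \zeta) = P^H_t \mathbf{1}_D(x)$, so that $\kappa$ is recovered as the killing rate
\begin{align*}
\kappa(x) = \lim_{t \downarrow 0} \frac{1 - P^H_t \mathbf{1}_D(x)}{t}.
\end{align*}
First I would rewrite the numerator: using the representation \eqref{semigH-1} together with $\int_0^\infty \mathbf{P}_0(H_t \in ds) = 1$,
\begin{align*}
1 - P^H_t \mathbf{1}_D(x) = \int_0^\infty \mathbf{P}_x(\zeta \leq s)\, \mathbf{P}_0(H_t \in ds) = \int_0^\infty \mathbf{P}_0(H_t \geq s)\, \mathbf{P}_x(\zeta \in ds),
\end{align*}
where the second equality is Fubini applied to the independent pair $(\zeta, H_t)$, both expressions being equal to $\mathbf{P}(\zeta \leq H_t)$.

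The heart of the argument is the passage to the limit inside this last integral. For a subordinator with L\'evy measure $\Pi$ one has the small-time asymptotics $t^{-1}\mathbf{P}_0(H_t \in \cdot) \to \Pi(\cdot)$ vaguely on sets bounded away from the origin; in particular, at every continuity point $s>0$ of $\overline{\Pi}$,
\begin{align*}
\lim_{t \downarrow 0} \frac{\mathbf{P}_0(H_t \geq s)}{t} = \Pi([s, \infty)) = \overline{\Pi}(s).
\end{align*}
To interchange the limit with the integral against $\mathbf{P}_x(\zeta \in ds)$ I would produce a dominating function that is uniform in $t$. Using Markov's inequality with $\lambda = 1/s$ and the elementary bounds $\mathbf{1}_{\{H_t \geq s\}} \leq (1 - e^{-H_t/s})/(1 - e^{-1})$ and $1 - e^{-u} \leq u$,
\begin{align*}
\frac{\mathbf{P}_0(H_t \geq s)}{t} \leq \frac{1 - \mathbf{E}_0[e^{-H_t/s}]}{t\,(1 - e^{-1})} = \frac{1 - e^{-t\Phi(1/s)}}{t\,(1 - e^{-1})} \leq \frac{\Phi(1/s)}{1 - e^{-1}},
\end{align*}
which is independent of $t$. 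Dominated convergence would then yield \eqref{killingMeas}, namely
\begin{align*}
\kappa(x) = \int_0^\infty \overline{\Pi}(s)\, \mathbf{P}_x(\zeta \in ds) = \mathbf{E}_x[\overline{\Pi}(\zeta)].
\end{align*}

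It remains to check that the dominating function is integrable against the law of $\zeta$, i.e. $\mathbf{E}_x[\Phi(1/\zeta)] < \infty$, and this is the only place where the two standing hypotheses enter. Since $\Phi$ is regularly varying at $+\infty$, a Karamata--Tauberian argument applied to \eqref{tailSymb} (the Laplace transform $\Phi(\lambda)/\lambda$ of $\overline{\Pi}$) shows that $\overline{\Pi}(s)$ is regularly varying at $0$ and comparable to $\Phi(1/s)$; the decay $\overline{\Pi}(s) \to 0$ as $s \to \infty$ controls the contribution of large $\zeta$, while near the origin an integration by parts reduces the finiteness of $\int_0^\delta \Phi(1/s)\,\mathbf{P}_x(\zeta \in ds)$ to the vanishing of the boundary term $\Phi(1/s)\,\mathbf{P}_x(\zeta \leq s)$ as $s \downarrow 0$, which is precisely assumption \eqref{condZERO}. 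The main obstacle is exactly this justification of the dominated-convergence step near $s = 0$, where $\overline{\Pi}$ (equivalently $\Phi(1/s)$) blows up: the regular variation of $\Phi$ and the boundary condition \eqref{condZERO} are tailored to guarantee both the uniform domination and the integrability that make the interchange legitimate and the resulting expectation finite. As a consistency check, for $\Phi(\lambda) = \lambda^\alpha$ one has $\overline{\Pi}(s) = s^{-\alpha}/\Gamma(1-\alpha)$, so that \eqref{killingMeas} recovers the formula $\kappa(x) = \Gamma(1-\alpha)^{-1}\,\mathbf{E}_x[\zeta^{-\alpha}]$ of \cite[Lemma 3.1]{Svond}.
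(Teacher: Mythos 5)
Your route is genuinely different from the paper's: you compute $\kappa$ as the small-time killing rate $\lim_{t\downarrow 0}t^{-1}\bigl(1-P^H_t\mathbf{1}_D(x)\bigr)$ and interchange the limit with the integral against the law of $\zeta$, whereas the paper starts from the Phillips-type identity $\kappa(x)=\int_0^\infty \mathbf{P}_x(\zeta\leq t)\,\Pi(dt)$ (following the proof of \cite[Lemma 3.1]{Svond}) and performs a Stieltjes integration by parts, invoking \eqref{condZERO} together with \cite[Proposition 1.5]{BerBook} exactly and only to annihilate the boundary term $\overline{\Pi}(s)\mathbf{P}_x(\zeta\leq s)$ at $s=0$ --- an identity valid in $[0,\infty]$ that never requires the finiteness your dominated convergence needs. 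Your uniform bound $t^{-1}\mathbf{P}_0(H_t\geq s)\leq (1-e^{-1})^{-1}\Phi(1/s)$ and the vague convergence $t^{-1}\mathbf{P}_0(H_t\geq s)\to\overline{\Pi}(s)$ are both correct and rather elegant. The genuine gap is your final step: the claim that integration by parts reduces $\mathbf{E}_x[\Phi(1/\zeta)]<\infty$ to the vanishing of the boundary term, i.e.\ to \eqref{condZERO}. Writing $F(s)=\mathbf{P}_x(\zeta\leq s)$, integration by parts gives
\begin{align*}
\int_\epsilon^\delta \Phi(1/s)\,dF(s) = \Phi(1/\delta)F(\delta)-\Phi(1/\epsilon)F(\epsilon)+\int_\epsilon^\delta F(s)\,d\bigl(-\Phi(1/s)\bigr),
\end{align*}
and \eqref{condZERO} controls only the boundary term; the surviving integral can diverge even when \eqref{condZERO} holds. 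For instance, with $\Phi(\lambda)=\lambda^\alpha$, any distribution function with $F(s)\asymp s^\alpha/\log(1/s)$ near $0$ satisfies $\Phi(1/s)F(s)\to 0$, yet $\int_0^\delta F(s)\,d(-s^{-\alpha})\asymp\int_0^\delta \frac{ds}{s\log(1/s)}=\infty$. So the vanishing of the boundary term is necessary but not sufficient for the integrability of your dominating function, and as written the dominated-convergence step is unjustified: \eqref{condZERO} is not the hypothesis doing the work you assign to it.

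The gap is repairable in two ways. (i) In the stated setting nothing delicate is needed: for the killed Brownian motion and $x\in D$ one has $F(s)\leq C\exp\bigl(-d(x,\partial D)^2/(Cs)\bigr)$ for small $s$, while $\Phi(1/s)\leq \Phi(1)/s$ for $s\leq 1$ because $\lambda\mapsto\Phi(\lambda)/\lambda$ is completely monotone, hence non-increasing; thus $\mathbf{E}_x[\Phi(1/\zeta)]<\infty$ unconditionally, dominated convergence applies, and \eqref{condZERO} turns out to play no role in your argument --- consistent with the paper's own remark (attributed to Vondra\v{c}ek, immediately after the proposition) that \eqref{killingMeas} holds with no hypothesis at all, by Fubini applied to $\kappa(x)=\int_0^\infty\mathbf{P}_x(\zeta\leq t)\,\Pi(dt)$. (ii) Abstractly, split cases: if $\mathbf{E}_x[\overline{\Pi}(\zeta)]=\infty$, Fatou already gives $\kappa(x)=\infty$, so \eqref{killingMeas} holds as an identity in $[0,\infty]$; if it is finite, your Karamata comparison $\overline{\Pi}(s)\asymp\Phi(1/s)$ near zero (valid for index $\alpha\in[0,1)$; for index $1$ one only gets $\overline{\Pi}(s)=o(\Phi(1/s))$, so fall back on (i)) makes the dominating function integrable. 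With either patch your proof is complete, and it has the merit of making visible that \eqref{condZERO} is superfluous; but the one step where you made the hypothesis carry the argument is precisely where the logic fails.
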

\begin{proof}
We follows the proof of Lemma 3.1 given in \cite{Svond} starting for the fact that
\begin{align*}
d \overline{\Pi}(s) = d\Pi((s,\infty)) = - \Pi(s) ds
\end{align*}
and
\begin{align*}
\mathbf{E}_x[\overline{\Pi}(\zeta)] = \int_0^\infty \overline{\Pi}(s) \,d\mathbf{P}_x(\zeta \leq s) = - \int_0^\infty \mathbf{P}_x(\zeta \leq s)\, d\overline{\Pi}(s)
\end{align*}
where, in the integration by parts, we used \cite[Proposition 1.5]{BerBook} and \eqref{condZERO}.
\end{proof}

We now provide a further result obtained in a fruitful discussion with Prof. Zoran Vondra\v{c}ek about the previous proposition. He noticed that for every subordinate semigroup it holds that
\begin{align*}
\kappa(x) = \int_0^\infty (1-P_t \mathbf{1}(x)) \Pi(dt).
\end{align*}
Thus, by exploiting the fact that $1-P_t\mathbf{1}(x) = \mathbf{P}_x(\zeta  \leq t)$, 
\begin{align*}
\kappa(x) = \int_0^\infty \mathbf{P}_x(\zeta \leq t) \, \Pi(dt) = \int_0^\infty \mathbf{E}_x[1_{(\zeta > t)}] \Pi(dt) 
\end{align*}
and, by Fubini's theorem,
\begin{align*}
\kappa(x) = \mathbf{E}_x \left[ \int_0^\infty 1_{(\zeta > t)} \Pi(dt)\right] = \mathbf{E}_x [\Pi((\zeta, \infty))] = \mathbf{E}_x[\overline{\Pi}(\zeta)],
\end{align*}
that is, we obtain \eqref{killingMeas} without any conditions about $\Phi$. Then, we state the following result.
\begin{prop}
Let $X$ be the $d-$dimensional killed Brownian motion on $D$. Let $H$ be the subordinator with symbol $\Phi$ defined in \eqref{LevKinFormula}. Let $\Phi$ be regularly varying at $+\infty$. Then 
\begin{align*}
\lim_{s\to 0} \Phi(1/s) \mathbf{P}_x(\zeta \leq s ) = 0.
\end{align*}
\end{prop}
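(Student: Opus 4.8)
The plan is to exploit the fact that, for a Brownian motion started at an interior point, the probability of leaving the domain in a very short time is exponentially (indeed Gaussian) small, and that this decay overwhelms the at most polynomial growth of $\Phi(1/s)$ as $s\downarrow 0$. Concretely, I would first establish an estimate of the form $\mathbf{P}_x(\zeta\le s)\le C\exp(-c/s)$ for suitable constants $c=c(x)>0$, $C>0$ and all small $s$, and then show that $\Phi(1/s)$ grows no faster than $1/s$; multiplying the two and letting $s\downarrow 0$ gives the claim.

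For the first step, fix $x\in D$ and set $\delta:=\mathrm{dist}(x,\partial D)>0$ (the killed Brownian motion starts in the open set $D$, so $\delta>0$). Since $\zeta=\tau_D$, the event $\{\zeta\le s\}$ forces the path to travel a distance at least $\delta$ before time $s$, so
\[
\mathbf{P}_x(\zeta\le s)\le \mathbf{P}_x\Big(\sup_{0\le t\le s}|X_t-x|\ge \delta\Big).
\]
Bounding the supremum of $|X-x|$ coordinate by coordinate and applying, in each coordinate, the reflection principle together with the Gaussian tail $\mathbf{P}(N(0,s)\ge a)\le \exp(-a^2/(2s))$, one obtains an estimate of the type $\mathbf{P}_x(\zeta\le s)\le C_d\,\exp(-\delta^2/(2ds))$. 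Thus $\mathbf{P}_x(\zeta\le s)\le C\exp(-c/s)$ with $c=\delta^2/(2d)>0$.

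For the second step I would control $\Phi(1/s)$ using only that $\Phi$ is a Bernstein function. Since $\Phi$ is concave with $\Phi(0)=0$ (both read off from \eqref{LevKinFormula}), the map $\lambda\mapsto \Phi(\lambda)/\lambda$ is non-increasing, whence $\Phi(\lambda)\le \Phi(1)\,\lambda$ for all $\lambda\ge 1$. Taking $\lambda=1/s$ with $s\le 1$ gives $\Phi(1/s)\le \Phi(1)/s$, and therefore
\[
\Phi(1/s)\,\mathbf{P}_x(\zeta\le s)\ \le\ \frac{C\,\Phi(1)}{s}\,\exp(-c/s)\ \xrightarrow[s\downarrow 0]{}\ 0,
\]
because $s^{-1}e^{-c/s}\to 0$. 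If instead one wishes to invoke the standing hypothesis that $\Phi$ is regularly varying at $+\infty$, then its index $\rho$ lies in $[0,1]$ and Potter's bounds give $\Phi(1/s)\le C_\varepsilon\, s^{-(\rho+\varepsilon)}$ for small $s$, which is again dominated by $\exp(-c/s)$; so the regular variation assumption is in fact not needed for this statement.

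The only genuinely substantive ingredient is the small-time exit estimate, and this is entirely standard; I expect the main (and minor) obstacle to be stating the Gaussian maximal bound cleanly in dimension $d$, for which it suffices to reduce to the one-dimensional maximal inequality coordinate by coordinate. Everything else follows directly from $\Phi$ being a Bernstein function.
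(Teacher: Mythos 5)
Your proof is correct, but it follows a genuinely different route from the paper's. You argue by direct estimation: the Gaussian small-time bound $\mathbf{P}_x(\zeta\le s)\le C\,e^{-c(x)/s}$ (reflection principle, coordinate by coordinate, with $c(x)$ coming from $\mathrm{dist}(x,\partial D)>0$) overwhelms the at-most-linear growth $\Phi(1/s)\le \Phi(1)/s$, which you correctly deduce from concavity of $\Phi$ together with $\Phi(0)=0$. Both steps are sound (the convergence is pointwise in $x$, which is all the statement asks), and, as you observe, this shows the regular-variation hypothesis is superfluous for the Brownian case and even yields an explicit rate. The paper's proof is structural rather than estimative: it first records (Vondra\v{c}ek's observation, via Fubini) that the killing-measure identity $\kappa(x)=\mathbf{E}_x[\overline{\Pi}(\zeta)]$ holds with no hypothesis on $\Phi$; comparing this with the integration by parts in the preceding proposition forces the boundary term to vanish, i.e. $\lim_{s\to 0}\overline{\Pi}(s)\,\mathbf{P}_x(\zeta\le s)=0$ for \emph{any} L\'evy measure $\Pi$; regular variation then enters only at the last step, through Bertoin's Proposition 1.5 (a Tauberian-type equivalence relating $\overline{\Pi}(s)$ as $s\to 0$ to $\Phi(1/s)$), to convert the tail of the L\'evy measure into the symbol. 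What each approach buys: yours is elementary, self-contained and quantitative, but it is tied to the Gaussian tail of Brownian motion; the paper's uses no path estimates at all, so it transfers verbatim to any continuous Markov base process for which the subordinate killing-measure identity holds (consistent with the paper's remark that its results extend to that wider class), at the price of genuinely needing the regular-variation hypothesis to pass from $\overline{\Pi}(s)$ to $\Phi(1/s)$.
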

\begin{proof}
From the previous proof we immediately see that \eqref{killingMeas} implies \eqref{condZERO}. The observation by Prof. Vondra\v{c}ek says that
\begin{align*}
\lim_{s\to 0} \overline{\Pi}(s) \mathbf{P}_x(\zeta \leq s ) = 0
\end{align*}
for any L\'{e}vy measure $\Pi$. Thus, from \cite[Proposition 1.5]{BerBook} we obtain a characterization in terms of the symbol $\Phi$.
\end{proof}

\begin{remark}
We observe that the results obtained in Section \ref{secTF}, Section \ref{secSF} and Section \ref{secTSF} still hold for the wide class of continuous Markov processes.
\end{remark}

\section{Delayed and Rushed  anomalous diffusions}

Normal and anomalous diffusions differ for the rate at which velocity correlation decreases to zero. Normal diffusion occurs if the velocity correlation decreases rapidly whereas anomalous diffusion is concerned with processes moving coherently for long times with no (or not frequent) changes of direction. This can be associated with the tail behaviour of the autocorrelation function: if the correlation function decays exponentially, then there is normal diffusion, whereas if the correlation function decays algebraically, then there is the possibility of anomalous diffusion. Thus, anomalous diffusion should be related with non-Markovian dynamics. However, the definition commonly used is based on the second moment (or mean square displacement) proportional to a power of time, say $t^\gamma$, for which we say that the process exhibits a subdiffusive behaviour if $\gamma<1$ (long tailed distribution in time) or a superdiffusive behaviour if $\gamma>1$ (long tailed distribution in space). 
Some other characterizations have been also considered, for example ultra-slow and strong anomalous diffusions. The ultra-slow diffusive behaviour is described by a mean square displacement proportional to $(\log t )^\gamma$ (Sinai-like diffusion). Strong anomalous diffusion has been investigated in the interesting paper \cite{CastVulp}.

Throughout 
we dealt with anomalous diffusion just considering the anomalous behaviour given by the second moment (or the mean square displacement). \\

 Anomalous diffusions have been also considered in terms of the second moment proportional to $t^{2/d_w}$ where $d_w$ is the random walk dimension. This scaling behaviour occurs in a variety of circumstances and have been observed with exponents $2/d_w$ smaller and larger than $1$, that is the ordinary Brownian motion on $\mathbb{R}^d$. For $d_w>2$ we refer to the process as subdiffusive (charge carrier transport in amorphous semiconductors, the motion of a bead in a polymer network, diffusion on fractals, see for example \cite{Mosco}). For $d_w < 2$ we refer to the process as superdiffusive (diffusion of two particles in turbulent flows, diffusion of tracer particles in vortex arrays in a rotating flow, layered velocity fields). \\

We would like to recall that the non-linear diffusion equation
$$\partial_t u = \Delta u^m = \textsl{div}(m \,u^{m-1} \nabla u)$$
gives rise to a diffusion called normal (if $m=1$), slow (if $m>1$), fast (if $m<1$), ultra-fast (if $m<0$) depending on the diffusivity $D(u) = m \, u^{m-1}$ (see, for example, \cite{Vasq}). In the present paper, we consider slow/fast diffusion meaning subdiffusive/superdiffusive behaviour in the anomalous motion given by the mean square displacement.\\

In this scenario, the main aspect to be analysed seems to be how the anomalous behaviour modifies first passage times. More precisely, how does the time change modify first passage times? 

We start with the following definition given in \cite[formulas (1.1) and (1.2)]{BCM} for the Brownian motion.
\begin{defin}
\label{def1}
Let $E \subset \mathbb{R}^d$ be an open connected set with finite volume. Let $B \subset E$ be a closed ball with non-zero radius. Let $X$ be a reflected Brownian motion on $\overline{E}$ and denote by $T_B= \inf\{t\geq 0\,:\, X_t \in B\}$ the first hitting time of $B$ by $X$. We say that $E$ is a \emph{trap domain} for $X$ if 
\begin{align*}
\sup_{x \in E} \mathbf{E}_x[T_B] = \infty.
\end{align*}
Otherwise, we say that $E$ is a \emph{non-trap domain} for $X$.
\end{defin}
In Definition \ref{def1}, the random time $T_B$ plays the role of lifetime for the Brownian motion on $\bar{E} \setminus \bar{B}$ reflected on $\partial E \setminus \partial B$ and killed on $\partial B$. On the other hand, $\mathbf{E}_x[T_B]< \infty$ for any $x \in E$ (\cite[Lemma 3.2]{BCM}). This aspect suggests to pay particular attention on the boundary. A process  may have an infinite lifetime depending on the regularity of the boundary $\partial E$ where it can be trapped.\\ 

Further on we denote by $\zeta$ (possibly with some superscript) the lifetime of a process $X$, that is for the process $X_t$ in $E$ with $X_0=x \in E$ (denote by $E^c$ the complement set of $E$),
\begin{align*}
\zeta := \inf\{t >0\,:\, X_t \in E^c\}.
\end{align*}
Let $T$ be a random time and denote by $X^T := X \circ T$ the process $X$ time-changed by $T$. It is well-known that $X^T$ is Markovian only for a Markovian time change $T$, otherwise from the Markov process $X$ we obtain a non-Markov process $X^T$. 

We introduce the following characterization of $X$ introduced in Section \ref{secTF} and the time-changed process $X^T$ in terms of the corresponding lifetimes. Denote by $\zeta^T$ the lifetime of $X^T$.
\begin{defin}
\label{defDelRus}
Let $E \subset \mathbb{R}^d$. 
\begin{itemize}
\item[-] We say that $X$ is delayed by $T$ if $\mathbf{E}_x[\zeta^T] > \mathbf{E}_x[\zeta]$, $\forall \, x \in E$.
\item[-] We say that $X$ is rushed by $T$ if $\mathbf{E}_x[\zeta^T] < \mathbf{E}_x[\zeta]$, $\forall \, x \in E$.
\end{itemize}
Otherwise, we say that $X$ runs with its velocity.
\end{defin}

\begin{remark}
Let $X$ be a Brownian motion. If $X$ is killed on $\partial E$ we notice that $\mathbf{E}_x[\zeta] < \infty$. We underline the fact that if $X$ is reflected on $\partial E \setminus \partial B$ and killed on $\partial B$ with $B \subset E$, we have that $\mathbf{E}_x[\zeta] < \infty$ only if $E$ is non-trap for $X$. 

Apart from smooth domains, examples of non-trap domains are given by snowflakes or more in general scale irregular fractals as in figures \ref{fig1} and \ref{fig2} (see \cite{CAP} for details). Figures \ref{fig1} and \ref{fig2} are realizations of the random domain obtained by choosing randomly the contraction factor step by step in the construction of the pre-fractal.

\begin{figure}
\includegraphics[scale=.8]{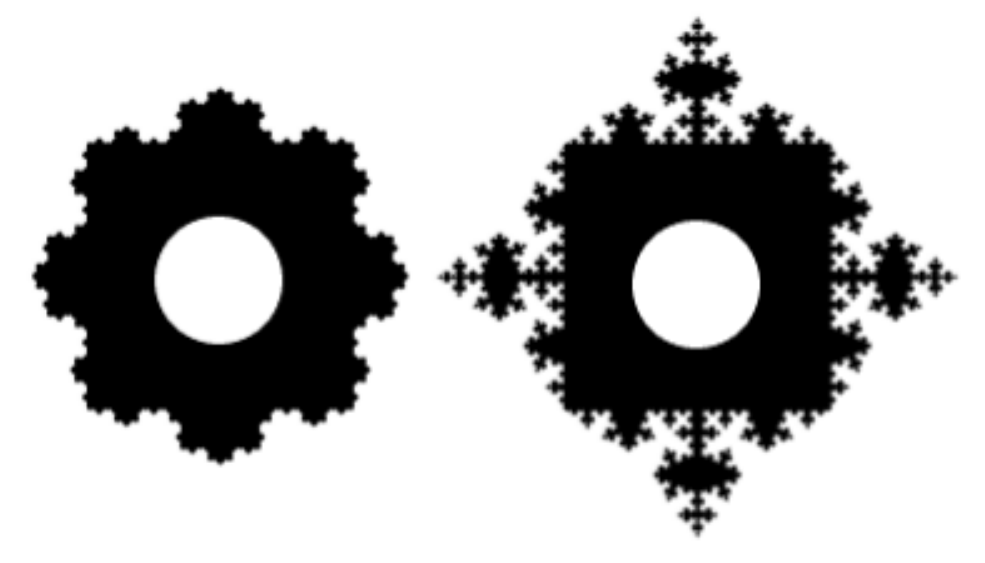} 
\caption{Koch curves outside the square. We have Neumann condition 
on $\partial E \setminus \partial B$ and Dirichlet condition on $\partial B$ where $B$ is the Ball inside $E$. The domains are non-trap for the Brownian motion. 
}
\label{fig1}
\end{figure}

\begin{figure}
\includegraphics[scale=.5]{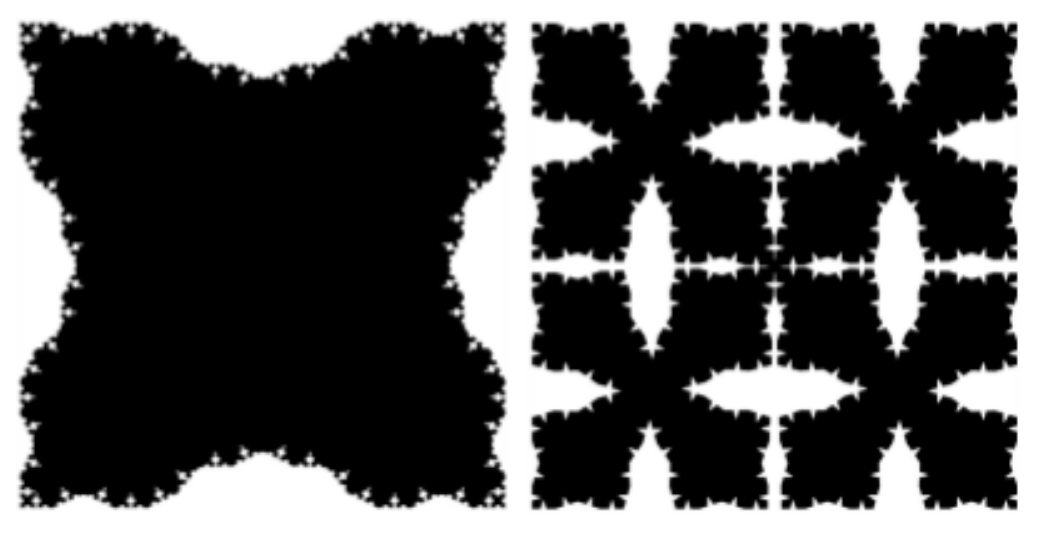} 
\caption{Koch curves inside the square. We have Dirichlet condition on the boundary $\partial E$. The domains are non-trap for the Brownian motion. 
}
\label{fig2}
\end{figure}

\end{remark}


We immediately see that, if $X^T$ is a delayed process,
\begin{align*}
\mathbf{P}_x(\zeta >t) < \mathbf{P}_x(\zeta^T > t) \leq \frac{1}{t} \mathbf{E}_x[\zeta^T],
\end{align*}
whereas, if $X_T$ is a rushed process,
\begin{align*}
\mathbf{P}_x(\zeta^T>t) < \mathbf{P}_x(\zeta > t) \leq \frac{1}{t} \mathbf{E}_x[\zeta].
\end{align*}
%
%
%
%

\begin{remark}
We observe that, from Theorem \ref{B}, 
\begin{align*}
\mathbf{E}_x[(\zeta^L - \zeta)] = \mathbf{E}_x[(H_\zeta - \zeta)] = \int_0^\infty (\mathbf{E}_0[H_s] - s) \mathbf{P}_x(\zeta \in ds) = (\Phi^\prime (0) - 1) \mathbf{E}_x[\zeta] 
\end{align*}
(which is negative/positive depending on $\Phi^\prime$) whereas, from Theorem \ref{meanHzeta},
\begin{align*}
\mathbf{E}_x[(\zeta^H - \zeta)] = & \mathbf{E}_x[(L_\zeta - \zeta)] = \int_0^\infty (\mathbf{E}_0[L_s] - s) \mathbf{P}_x(\zeta \in ds).
\end{align*}
There are explicit representations of $\mathbf{E}_0[L_s]$ only in some case.
\end{remark}


Some interesting consideration have been given in \cite{GDLS} where analytical and numerical results on exit times for stable processes have been investigated. The authors observed that the mean exit time depends on the domain size and the index $\alpha$. Some comparison has been given in terms of $\alpha_1, \alpha_2 \in [0,1]$. They argued that the mean exit time can help us to establish the dominant component between the jump frequency or the the jump size.

\section{Examples and Applications}

In this section we
consider some special examples for the time change $T$ which are of interest in our analysis. We write $f\sim g$ if $f(z)/g(z) \to 1$ as $z \to \infty$. We denote by $C$ any positive constant appearing in the following examples. 

\begin{example}
Let us consider $\Phi(\lambda)=\lambda^\alpha$ with $\alpha \in (0,1)$. 
\begin{itemize}
\item[i)] By using the Laplace transform \eqref{Lap-mittag-L-F} of the density of $L$, we have that
\begin{align*}
\sum_{k \geq 0} \frac{(-\lambda)^k}{k!} \mathbf{E}_0[(L_t)^k] = \mathbf{E}_0[e^{-\lambda t}]= E_\alpha (-\lambda t^\alpha) = \sum_{k\geq 0} \frac{(- \lambda t^\alpha)^k}{\Gamma(\alpha k + 1)}
\end{align*}
from which
\begin{align*}
\mathbf{E}_0[L_t] = \frac{ t^\alpha}{\Gamma(\alpha + 1)}
\end{align*}
where we denoted by $\Gamma(z) = \int_0^\infty e^{-s} s^{z-1} ds$ the gamma function. Thus, for the standard Brownian motion $X$ on $\mathbb{R}^d$ we have that $X^L$ is a diffusion with
\begin{align*}
\mathbf{E}_x[(X\circ L_t)^2] = \mathbf{E}_0[L_t] = \frac{ t^\alpha}{\Gamma(\alpha + 1)}.
\end{align*}
\item[ii)] Let us consider the killed Brownian motion $X$ in $E \subset \mathbb{R}^d$ with the first exit time 
\begin{align*}
\tau_E = \inf \{ t> 0\,:\, X_t \not \in E\}
\end{align*}
such that $\mathbf{E}_x[\tau_E] < \infty$. From Theorem \ref{B}
\begin{align*}
\infty = \mathbf{E}_x[\zeta^L] > \mathbf{E}_x[\zeta] = \mathbf{E}_x[\tau_E]
\end{align*}
and we say that $X$ in $E$ is delayed by $L$. The asymptotic behaviour of the delayed Brownian motion have been investigated also in the interesting paper \cite{Mschilling}.
\end{itemize}

The time-changed process $X_{L_t}$, $t\geq 0$ is usually termed delayed process in the sense that the new clock $L_t$ is a process whose trajectories have plateaus. Actually, $X$ is delayed by $L$ in the sense of Definition \ref{defDelRus}. 
\end{example}

\begin{example}
\label{commentsZV}
Now we focus on the stable subordinator $H$ with symbol $\Phi(\lambda)=\lambda^\alpha$, $\alpha \in (0,1)$. Let $\mathcal{B}_r= \{x \in \mathbb{R}^d\,:\, |x| \leq r\}$ be the ball of radius $r>0$. Let $X$ be a Brownian motion on $\mathbb{R}^d$, $d\geq 1$.
\begin{itemize}
\item[i)] First we recall that (see \cite[formula (4.7)]{dov2010} for example)
\begin{align}
\mathbf{E}_0 [(H_t)^\gamma] = \frac{\Gamma(1- \gamma/\alpha)}{\Gamma(1-\gamma)} t^{\gamma/\alpha}, \quad -\infty < \gamma < \alpha
\label{momH}
\end{align}
Then,
\begin{align*}
\mathbf{E}_x[(X \circ H_t)^2] = \mathbf{E}_0[H_t]
\end{align*}
which is infinite. We should say that $X^H$ behaves like a superdiffusion.
\item[ii)] Let us consider symmetric stable process of order $2\alpha$, then the mean exit time from the ball $\mathcal{B}_r$ is given by (see \cite[formula (24)]{BGR})
\begin{align}
\mathbf{E}_x[\tau_{\mathcal{B}_r}] = \frac{1}{2^{2\alpha}} \frac{1}{\Gamma(\alpha + 1)} \frac{\Gamma(d/2)}{\Gamma(\alpha + d/2)} \left( r^2 - |x|^2 \right)^\alpha.
\label{hittingSS}
\end{align}
Let us consider a Brownian motion $X$ in $\mathbb{R}^d$ with $X_0=x \in \mathcal{B}_r$ killed on $\partial \mathcal{B}_r$. Then $X^H$ is identical in law to a symmetry stable process of order $2\alpha$. That is, $X^H$ is a subordinated killed Brownian motion (see the interesting paper \cite{KSvond}). Let $\zeta^H$ be the lifetime of $X^H$. We have that $\tau_{\mathcal{B}_r} = \zeta^H$ almost surely. 
Let $\zeta$ be the lifetime of the base process $X$. 

For $X_0=0$, that is, $X$ is a Brownian motion on $\mathcal{B}_r$ started at zero, formula \eqref{hittingSS} says that
\begin{align}
\label{rmkZV}
 \mathbf{E}_0[\zeta^H] < \mathbf{E}_0[\zeta]
\end{align}
only if
$$r > 2 \left( \frac{1}{\Gamma(\alpha + 1)} \frac{\Gamma(1+d/2)}{\Gamma(\alpha + d/2)} \right)^{1/(2-2\alpha)} = \rho(\alpha, d)= \rho > 0. $$
In particular, if $d=1$, then formula \eqref{rmkZV} holds for $r  > e^{\frac{3}{2}- \gamma} \approx 2.52$ $\forall \, \alpha$ where $\gamma \approx 0.577$ is the Euler-Mascheroni constant. If $d=2$, then formula \eqref{rmkZV}  holds for $r> 2 e^{1-\gamma} \approx 3.05$, $\forall\, \alpha$.

Let us  consider, for $r>\rho$, the annulus $\Sigma=\mathcal{B}_r \setminus \mathcal{B}_{\rho^*}$ and the set $\mathcal{B}_r = \mathcal{B}_{\rho^*} \cup \Sigma$ where $\rho^*=\sqrt{r^2 - \rho^2}$. For the Brownian motion $X$ started at $x \in \mathcal{B}_r$, we obtain
\begin{align*}
\mathbf{E}_x[\zeta^H] < \mathbf{E}_x[\zeta] \quad \forall\, x \in \mathcal{B}_{\rho^*}
\end{align*}
and
\begin{align*}
\mathbf{E}_x[\zeta^H] > \mathbf{E}_x[\zeta] \quad \forall\, x\in \mathcal{B}_r \setminus \mathcal{B}_{\rho^*} 
\end{align*}
that is, $X$ on $\mathcal{B}_{\rho^*}$ is rushed by $H$ and $X$ on $\Sigma$ is delayed by $H$. Notice that, as $r\to \infty$, $X$ on $\mathbb{R}^d$ is rushed by $H$.\\


Let us consider now the ball $\mathcal{B}_r$ with $r<\rho$. For the Brownian motion $X$ started at $x \in \mathcal{B}_r$ we have that
\begin{align*}
\mathbf{E}_x[\zeta^H] > \mathbf{E}_x[\zeta] \quad \forall\, x \in \mathcal{B}_r
\end{align*}
and $X$ on $\mathcal{B}_r$ is delayed by $H$.

\begin{figure}
\includegraphics[scale=.6]{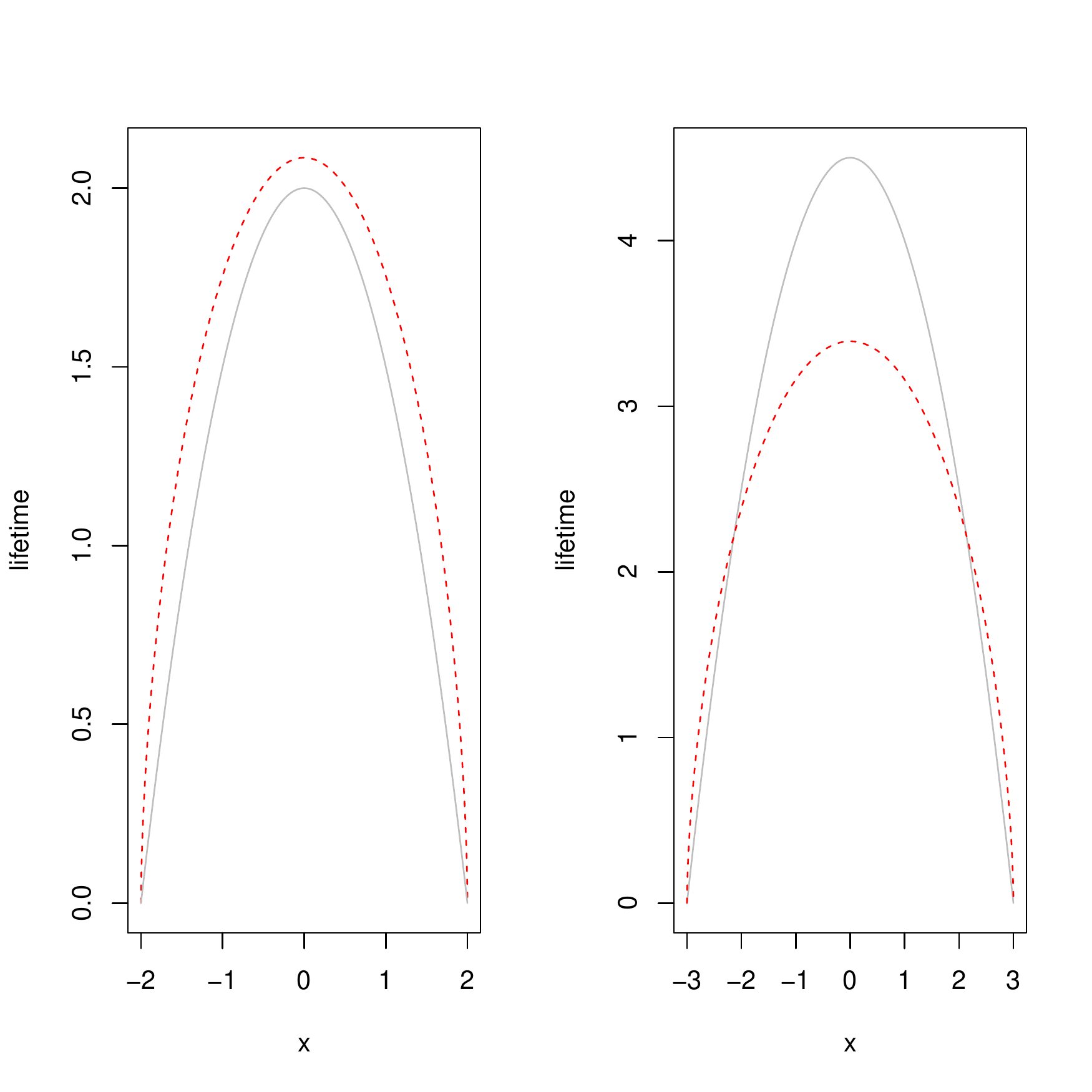} 
\caption{Mean exit times \eqref{hittingSS} from the set (ball) $\mathcal{B}_r \subset \mathbb{R}$ with $r=1$ (first picture) and $r=3$ (second picture). The dashed line refers to $\alpha=0.6$ whereas the solid line refers to $\alpha=1$ (the Brownian motion case). We have that $\rho=2.101$. In the second picture we have that $\rho^*=  2.1356$ determines the set (annulus) $\Sigma= \mathcal{B}_3 \setminus \mathcal{B}_{\rho^*}$ in which $\mathbf{E}_x[\zeta^H] > \mathbf{E}_x[\zeta]$, $\forall\, x \in \Sigma$.}
\end{figure}
\end{itemize}
\end{example}

\begin{example}
Let us consider $\Phi(\lambda)=\lambda^\alpha$ and the exit time from $(-\infty, a)$ for a Brownian motion $X$ on the real line. 
\begin{itemize}
\item[i)] From \eqref{momH}, the mean square displacement of $X \circ H$ is infinite. Furthermore, we immediately see that, for $\gamma \in (0, \alpha)$, the process $X \circ (H)^\gamma$ is a subdiffusion.
\item[ii)] It is well known that (see \cite[page 30]{BatWay})
\begin{align}
\mathbf{P}_{x}(\tau_{(-\infty, a)} \in dt) = \frac{|a-x|}{\sqrt{4\pi t^3}} e^{- \frac{(a-x)^2}{4t}} dt, \quad x<a
\label{densityHittingBM}
\end{align}
is the density of the exit time from $(-\infty, a)$ of a Brownian motion with generator $\Delta$. Formula \eqref{densityHittingBM} gives the density of a $1/2$-stable subordinator  for which 
\begin{align*}
\mathbf{E}_x[(\tau_{(-\infty, a)})^\alpha] = \frac{\Gamma(1/2 - \alpha)}{2^\alpha\, \Gamma(1/2)} |a -x|^{2\alpha }, \quad \alpha < 1/2
\end{align*}
and is infinite for $\alpha>1/2$ (as stated also in \eqref{momH}). Thus, for $\alpha < 1/2$
\begin{align*}
\mathbf{E}_x[(\tau_{(-\infty, a)})^\alpha] < \mathbf{E}_x[\tau_{(-\infty, a)}] = \infty 
\end{align*} 
This means that for the killed Brownian motion $X_t$ on $E=(-\infty, a)$, from Theorem \ref{meanHzeta}, we obtain for $\alpha<1/2$
$$\mathbf{E}_x[\zeta^H] = \frac{1}{\Gamma(\alpha + 1)} \mathbf{E}_x[(\zeta)^\alpha] < \mathbf{E}_x[\zeta]= \mathbf{E}_x[\tau_E]$$
and we say that $X$ is rushed by $H$ (only if $\alpha < 1/2$).
\end{itemize}
\end{example}

\begin{example}
(Sierpinski gasket) 
Consider $\Phi(\lambda)=\lambda^\alpha$. Let $X$ the Brownian motion on a Sierpinski gasket with walk dimension $d_w= \log(5)/\log(2) \simeq 2.32$. 
\begin{itemize}
\item[i)] With formula \eqref{momH} at hand, from  \cite[Corollary 1.6]{BarPerk}, we know that there exists $C$ such that, $\forall\, t>0$,
\begin{align*}
\mathbf{E}_x[(X \circ H_t)^2] = C\, \mathbf{E}_0[(H_t)^{2/d_w}] = \left\lbrace
\begin{array}{ll}
\displaystyle C\frac{\Gamma(1-2/(\alpha\, d_w))}{\Gamma(1-2/d_w)} t^{2/(\alpha\, d_w)}, & 2/d_w <\alpha\\
\displaystyle \infty, & 2/d_w \geq \alpha
\end{array}
\right. .
\end{align*}
From this we argue that, if $H$ is of order $\alpha > 2/d_w$, the Brownian motion on the gasket time-changed by $H$, $X^H_t$,  maintains the subdiffusive behaviour. On the other hand, the behaviour for $\alpha < 2/d_w$ may be related, roughly, to a superdiffusive behaviour.\\  

We notice that the previous argument applies for $d_w>2$, that is for Brownian motions on various fractal domains. 
\item[ii)] Focus on the random time given by the inverse $L$ of the stable process $H$. We have that
\begin{align*}
\mathbf{E}_x[(X \circ L_t)^2] = C\, \mathbf{E}_0[(L_t)^{2/d_w}] \sim t^{2\alpha / d_w}.
\end{align*} 
Thus, $\forall\, \alpha$, $X^L_t$ exhibits a subdiffusive behaviour. 
\end{itemize}
\end{example}
 
\begin{example} 
\label{exampleGammaInverso}
Let us focus on the symbol $\Phi(\lambda)= a \ln (1+ \lambda/b)$, $a>0$, $b>0$, that is $L$ is an inverse to a gamma subordinator $H$.

\begin{itemize}
\item[i)] It is well-known that, for an inverse process $L$, there exist two positive constants $c_1, c_2$ such that (\cite{BerBook})
\begin{align}
c_1 / \Phi(1/t) <  \mathbf{E}_0[L_t] < c_2 / \Phi(1/t).
\end{align}
Than, for the symbol of a gamma subordinator it holds that 
\begin{align*}
\Phi(1/t) = \frac{1}{t} \ln \left( 1+ \frac{1}{b t} \right)^{a t} \sim  \frac{a}{b} \frac{1}{t} \quad \textrm{as } \, t \to \infty.
\end{align*}
Thus, the inverse $L$ has the property
\begin{align*}
\mathbf{E}_0[L_t]  \sim C \frac{b}{a} t \quad \textrm{as } \, t \to \infty
\end{align*}
with $a,b>0$. We have a process with natural diffusivity, that is
\begin{align*}
\mathbf{E}_x[(X\circ L_t)^2] \sim C\,  t, \quad as\ t \to \infty.
\end{align*}
\item[ii)] By considering the time-changed process $X \circ L_t$ we have that (see Theorem \ref{B})
\begin{align*}
\mathbf{E}_x[\zeta^L] = \frac{a}{b} \mathbf{E}_x[\zeta]
\end{align*}
that is, if $a<b$ then $X$ is rushed by $L$ whereas, if $a>b$ then $X$ is delayed by $L$.
\end{itemize}
 
\end{example}

 \begin{example} 
 \label{exampleGammaH}
Let us consider now the gamma subordinator $H$ introduced in the previous example. 
\begin{itemize}
\item[i)] From
\begin{align}
\mathbf{E}_0[H_t] = \frac{d}{d\lambda} \left(1 - e^{- t \Phi(\lambda)} \right) \bigg|_{\lambda=0} = t \Phi^\prime(0)
\label{phiH}
\end{align} 
we obtain
\begin{align*}
\mathbf{E}_0[H_t] = \frac{a}{b} t
\end{align*}
with $a,b>0$. We still have a process with natural diffusivity, that is 
\begin{align*}
\mathbf{E}_x[(X\circ H_t)^2] \sim C\,  t, \quad as\ t \to \infty.
\end{align*}
\item[ii)] For $X \circ H$ , from Theorem \ref{meanHzeta} we have that
\begin{align}
\mathbf{E}_x[\zeta^H] = \mathbf{E}_x[L_\zeta].
\label{lifeH}
\end{align}
We show that
\begin{align}
\label{tmp-a-coeff}
\mathbf{E}_x[L_\zeta] > \mathbf{E}_x[\zeta]
\end{align}
and the base process $X$ is delayed by the random time $H$ only if $a<b$. For the sake of simplicity we set $b=1$. By using the explicit representation of $\mathbf{E}_0[L_t]$  given in \cite[formula  (21)]{kumarPS} we write
\begin{align*}
\mathbf{E}_x[L_\zeta] = \frac{1}{a}\mathbf{E}_x[\zeta] + \frac{1}{2a} - \frac{1}{a} \mathbf{E}_x\left[ e^{-\zeta} \int_0^\infty \frac{e^{-y\zeta}}{(1+y)\, [(\ln y)^2 + \pi^2]} dy \right].
\end{align*}
By considering the integral representation (\cite[pag 2130]{BergPed}, \cite[pag. 989]{QiZhang})
\begin{align}
\label{formulaLN}
\frac{1}{\ln(1+z)} = \frac{1}{z} + \int_1^\infty \frac{1}{(z+y)[(\ln(y-1))^2 + \pi^2]} dy, \quad z \in \mathbb{C} \setminus (-\infty , 0]
\end{align}
and the fact that $\mathbf{E}_x[e^{-(y+1)\zeta}] < 1$, $\forall\, y$ we get that 
\begin{align*}
\mathbf{E}_x\left[ e^{-\zeta} \int_0^\infty \frac{e^{-y\zeta}}{(1+y)\, [(\ln y)^2 + \pi^2]} dy \right] < \int_0^\infty \frac{1}{(1+y)\, [(\ln y)^2 + \pi^2]} dy.
\end{align*}
From \eqref{formulaLN}, we have 
\begin{align*}
\int_0^\infty \frac{1}{(1+y)\, [(\ln y)^2 + \pi^2]} dy = \lim_{z\to 0} \left( \frac{1}{\ln (1+z)} - \frac{1}{z} \right) = \frac{1}{2}
\end{align*}
and we obtain that
\begin{align*}
\mathbf{E}_x[L_\zeta] > \frac{1}{a} \mathbf{E}_x[\zeta].
\end{align*}
The inequality \eqref{tmp-a-coeff} follows only if $a<1$. Then, we say that $X$ is delayed by $H$. 
\end{itemize}
\end{example}

\begin{example}
\label{ExTSS}
A further important example is given by $\Phi(\lambda) = (\lambda + \eta)^\alpha - \eta^\alpha$ for the relativistic stable subordinator $H$. The associated time operator $\mathfrak{D}^\Phi_t$ is usually called tempered fractional derivative (\cite{Beghin}). Tempered stable distribution is particularly attractive in modelling transition from the initial subdiffusive character of motion to the standard diffusion for long times (for the applications of tempered stable distributions see, for example, \cite{jeon,Stani}). Let us consider the inverse $L$ to the subordinator $H$ and the time-changed process $X \circ L$ where $X$ is the Brownian motion.
\begin{itemize}
\item[i)] From Proposition 3.1 in \cite{KumVell}, for large $t$,
\begin{align*}
\mathbf{E}_x[(X \circ L_t)^2] = \mathbf{E}_0[L_t] \sim \frac{1}{\alpha \eta^{\alpha - 1}} t
\end{align*}
and we have a natural diffusion.
\item[ii)] From Theorem \ref{B} we get that
\begin{align*}
\mathbf{E}_x[\zeta^L] =\alpha \eta^{\alpha -1} \mathbf{E}_x[\zeta]
\end{align*}
that is, if $\alpha \eta^{\alpha-1} < 1$ then the process $X$ is rushed by $L$,  whereas if $\alpha \eta^{\alpha-1} > 1$ then the process $X$ is delayed by $L$.
\end{itemize}
\end{example}

\begin{example}
We now consider a further instructive example which does not completely fit the previous scenarios. More precisely, we deal with the one-dimensional fractional Brownian motion $B^\mathcal{H}$ whose density solves
\begin{align*}
\partial_t u = \mathcal{H} t^{2\mathcal{H}-1} \partial_{xx} u, \quad t >0,\, x \in \mathbb{R}, \;\; \mathcal{H} \in (0,1)
\end{align*}
where $\mathcal{H}$ is the Hurst exponent. Although this process is not in the class introduced in Section 2 we consider such a process because of the second moment $\mathbf{E}_x[(B^\mathcal{H}_t)^2] = t^{2\mathcal{H}}$. We provide the following characterizations as in the previous analysis. 
\begin{itemize}
\item[i)] We have that
\begin{align*}
\mathbf{E}_x[(B^\mathcal{H} \circ H_t)^2] = \mathbf{E}_0[(H_t)^{2\mathcal{H}}], \quad \mathbf{E}_x[(B^\mathcal{H} \circ L_t)^2] = \mathbf{E}_0[(L_t)^{2\mathcal{H}}].
\end{align*}
Let $L$ be an inverse to $H$ with symbol $\Phi(\lambda) = a\ln (1+ \lambda/b)$. Then,
\begin{align*}
\mathbf{P}_0(H_t \in dx)= \frac{b^{at} x^{at -1}}{\Gamma(at)} e^{-bx}dx
\end{align*}
and, for large $t$, 
\begin{align*}
\mathbf{E}_0[(H_t)^\gamma] = \frac{1}{b^\gamma} \frac{\Gamma(at + \gamma)}{\Gamma(at)} \sim \left(\frac{a}{b}t \right)^\gamma.
\end{align*}
Using the relation $\mathbf{P}_0(L_t<x) = \mathbf{P}_0(H_x >t)$ we obtain density and moments for $L_t$. By following  \cite{kumarPS} we are able to obtain
\begin{align*}
\int_0^\infty e^{-\lambda t} \mathbf{E}_0[(L_t)^\gamma] dt =  \frac{\Gamma(1 + \gamma)}{\lambda \left( a \ln (1+ \lambda/b)\right)^\gamma} \sim \frac{b^\gamma}{a^\gamma} \frac{\Gamma(1+\gamma)}{\lambda^{\gamma +1}} \quad \textrm{as}\; \lambda \to 0
\end{align*}
from which we get the $\gamma$-moment of the inverse gamma subordinator
\begin{align*}
\mathbf{E}_0[(L_t)^\gamma] \sim \left( \frac{b}{a}t \right)^\gamma.
\end{align*} 
Therefore, we have that, for large $t$,
\begin{align*}
\mathbf{E}_x[(B^\mathcal{H} \circ L_t)^2] \sim \left( \frac{b}{a}t \right)^{2\mathcal{H}}, \quad \mathbf{E}_x[(B^\mathcal{H} \circ H_t)^2] \sim \left( \frac{a}{b}t\right)^{2\mathcal{H}}
\end{align*}
and the subordination leads to anomalous diffusion as well as the time change by inverse subordinator.
\item[ii)] By generalizing the examples \ref{exampleGammaInverso} and \ref{exampleGammaH} we obtain
\begin{align}
\label{nnnggg}
\mathbf{E}_x[\zeta^L] = \frac{a}{b} \mathbf{E}_x[\zeta]
\end{align}
where $\zeta$ is the lifetime of $B^\mathcal{H}$ and $\zeta^L$ is the lifetime of $B^\mathcal{H} \circ L$. Notice that formula \eqref{NoSemig} still holds if $P_t$ is not a semigroup (that is the case here for $B^\mathcal{H}$). Thus, Theorem \ref{meanLzeta} leads to \eqref{nnnggg}.
\end{itemize}

In particular, for the (time-changed) process $B^\mathcal{H} \circ L$ we can write the following table
\begin{center}
\begin{tabular}{c|c|c|c}
 & sub & super & natural\\
 \hline
rushed & $a<b$, $\mathcal{H} < 1/2$ & $a<b$, $\mathcal{H} > 1/2$ & $\mathcal{H}=1/2$\\
\hline
delayed & $a>b$, $\mathcal{H} < 1/2$ & $a>b$, $\mathcal{H} > 1/2$ &$\mathcal{H}=1/2$
\end{tabular} .
\end{center}
The time change for the fractional Brownian motion has been recently considered in \cite{KGWP}. The authors considered the tempered stable subordinator introduced in Example \ref{ExTSS}.
\end{example}

\textbf{Acknowledgement.} The authors would like to thank Prof. Zoran Vondra\v{c}ek for precious discussions and valuable comments.

\textbf{Grant.} The authors are members of GNAMPA (INdAM) and are partially supported by Grants Ateneo \lq\lq Sapienza" 2017. 
\bigskip

\end{document}